\newcommand{\E}{\mathbb{E}}
\DeclarePairedDelimiterX{\expdelim}[1]{[}{]}{#1} 
\newcommand{\Exp}[1]{\E\expdelim*{#1}}
\DeclarePairedDelimiterX{\expconddelim}[2]{[}{]}{#1\,\delimsize\vert\, \mathopen{}#2} 
\newcommand{\Expcond}[2]{\E\expconddelim*{#1}{#2}}
\newcommand{\R}{\mathbb{R}}
\DeclareMathOperator*{\argmin}{arg\,min}
\newcommand{\prox}[3][]{\operatorname{prox}^{#1}_{#2}\left(#3 \right)}
\theoremstyle{plain}
\newtheorem{theorem}{Theorem}[section]
\newtheorem{lemma}[theorem]{Lemma}
\newtheorem{proposition}[theorem]{Proposition}
\newtheorem{algo}{Algorithm}[section]
\newtheorem{assumption}{Assumption}
\theoremstyle{remark}
\newcommand{\Id}{\textup{Id}}
\renewcommand{\phi}{\varphi}
\title{Two steps at a time --- taking GAN training in stride with Tseng's method}
\author{%
  Axel B{\"o}hm $^{1, *}$\\
  \And{}
  Michael Sedlmayer $^{2, *}$\\
  \And{}
  Ern{\"o} Robert Csetnek $^{1, *}$\\
  \And{}
  Radu Ioan Bo{\c t} $^{1, *}$\\
  \And{} \\
  [-6mm]
  $^1$ Faculty of Mathematics, 
  $^2$ Research Platform Data Science @ Uni Vienna,\\
  $^*$
  University of Vienna, Austria\\
  \texttt{\{axel.boehm,michael.sedlmayer,robert.csetnek,radu.bot\}@univie.ac.at}
}
\date{\today}
\begin{document}

\maketitle
\begin{abstract}%
  Motivated by the training of Generative Adversarial Networks (GANs), we study methods for solving minimax problems with additional nonsmooth regularizers.
  We do so by employing \emph{monotone operator} theory, in particular the \emph{Forward-Backward-Forward (FBF)} method, which avoids the known issue of limit cycling by correcting each update by a second gradient evaluation.
  Furthermore, we propose a seemingly new scheme which recycles old gradients to mitigate the additional computational cost.
  In doing so we rediscover a known method, related to \emph{Optimistic Gradient Descent Ascent (OGDA)}.
  For both schemes we prove novel convergence rates for convex-concave minimax problems via a unifying approach. The derived error bounds are in terms of the gap function for the ergodic iterates.
  For the deterministic and the stochastic problem we show a convergence rate of $\mathcal{O}(\nicefrac{1}{k})$ and $\mathcal{O}(\nicefrac{1}{\sqrt{k}})$, respectively.
  We complement our theoretical results with empirical improvements in the training of Wasserstein GANs on the CIFAR10 dataset.
\end{abstract}

\section{Introduction}%
\label{sec:intro}
\emph{Generative Adversarial Networks (GANs)}~\cite{GAN} have proven to be a powerful class of generative models, producing for example unseen realistic images.
Two neural networks, called generator and discriminator, compete against each other in a game.
In the special case of a zero sum game this task can be formulated as a minimax (aka saddle point) problem.


Conventionally, GANs are trained using variants of (stochastic) \emph{Gradient Descent Ascent (GDA)} which are known to exhibit oscillatory behavior and thus fail to converge even for simple bilinear saddle point problems, see~\cite{GAN2}.
We therefore propose the use of methods with provable convergence guarantees for (stochastic) convex-concave minimax problems, even though GANs are well known to not warrant these properties.
Along similar considerations an adaptation of the \emph{Extragradient method (EG)}~\cite{extragradient} for the training of GANs was suggested in~\cite{VIP-GAN-gidel}, whereas~\cite{OGDA,OGDA2,OGDA3} studied \emph{Optimistic Gradient Descent Ascent (OGDA)} based on \emph{optimistic mirror descent}~\cite{omd,omd2}.
We however investigate the \emph{Forward-Backward-Forward (FBF)} method~\cite{tseng} from monotone operator theory, which uses two gradient evaluations per update, similar to EG, in order to circumvent the aforementioned issues.

Instead of trying to improve GAN performance via new architectures, loss functions, etc., we contribute to the theoretical foundation of their training from the point of view of optimization.

\paragraph{Contribution.} Establishing the connection between GAN training and \emph{monotone inclusions}~\cite{bc} motivates to use the FBF method, originally designed to solve this type of problems. This approach allows to naturally extend the constrained setting to a regularized one making use of the proximal operator.

We also propose a variant of FBF reusing previous gradients to reduce the computational cost per iteration, which turns out to be a known method, related to OGDA.\ By developing a unifying scheme that captures FBF and a generalization of OGDA, we reveal a hitherto unknown connection.
Using this approach we prove novel non asymptotic convergence statements in terms of the minimax gap for both methods in the context of saddle point problems.
In the deterministic and stochastic setting we obtain rates of $\mathcal{O}(\nicefrac{1}{k})$ and $\mathcal{O}(\nicefrac{1}{\sqrt{k}})$, respectively.
Concluding, we highlight the relevance of our proposed method as well as the role of regularizers by showing empirical improvements in the training of Wasserstein GANs on the CIFAR10 dataset.

\paragraph{Organization.} This paper is structured as follows. In Section~\ref{sec:mon-incl} we highlight the connection of GAN training and monotone inclusions and give an extensive review of methods with convergence guarantees for the latter.
The main results as well as a precise definition of the measure of optimality are discussed in Section~\ref{sec:main}. Concluding, Section~\ref{sec:experiments} illustrates the empirical performance in the training of GANs as well as solving bilinear problems.

\section{GAN training as monotone inclusion}%
\label{sec:mon-incl}

The GAN objective was originally cast as a two-player zero-sum game (see~\cite{GAN}) between the discriminator $D_y$ and the generator $G_x$ given by
\begin{equation*}
  \label{eq:gan}
  \min_x \max_y\, \E_{\rho \sim q}[\log(D_y(\rho))] + \E_{\zeta \sim p}[\log(1-D_y(G_x(\zeta)))],
\end{equation*}
exhibiting the aforementioned minimax structure.
Due to problems with vanishing gradients in the training of such models, a successful alternative formulation called \emph{Wasserstein GAN (WGAN)}~\cite{wgan} has been proposed. In this case the minimization tries to reduce the Wasserstein distance between the true distribution $q$ and the one learned by the generator. Reformulating this distance via the Kantorovich Rubinstein duality leads to an inner maximization over 1-Lipschitz functions which are approximated via neural networks, yielding the saddle point problem
\begin{equation*}
  \label{eq:wgan}
  \min_x \max_y\, \E_{\rho \sim q}[D_y(\rho)] - \E_{\zeta \sim p}[D_y(G_x(\zeta))].
\end{equation*}

\subsection{Convex-concave minimax problems}%
Due to the observations made in the previous paragraph we study the following abstract minimax problem
\begin{equation}
  \label{eq:stoch-minimax-reg}
  \min_{x\in\R^d} \max_{y\in\R^n} \, \Psi(x,y) :=f(x) + \E_{\xi \sim Q}\left[\Phi(x,y;\xi)\right] - h(y),
\end{equation}
where the convex-concave coupling function $\Phi(x,y) := \E_{\xi \sim Q}\left[\Phi(x,y;\xi)\right]$ is differentiable with $L$-Lipschitz continuous gradient. The proper, convex and lower semicontinuous functions $f:\R^d \to \R\cup\{+\infty\}$ and $h:\R^n \to \R\cup\{+\infty\}$ act as regularizers.
A solution of~\eqref{eq:stoch-minimax-reg} is given by a so-called \emph{saddle point} $(x^*,y^*)$ fulfilling for all $x$ and $y$
\begin{equation}
  \Psi(x^*,y) \le \Psi(x^*,y^*) \le \Psi(x,y^*).
\end{equation}
In the context of two-player games this corresponds to a pair of strategies, where no player can be better off by changing just their own strategy.

For illustrative purposes, we will restrict ourselves for now to the special case of the \emph{deterministic constrained} version of~\eqref{eq:stoch-minimax-reg}, given by
\begin{equation}
  \label{eq:constrained}
  \min_{x\in X} \max_{y\in Y}\, \Phi(x,y)
\end{equation}
where $f$ and $h$ are given by indicator functions of closed convex sets $X$ and $Y$, respectively.
The indicator function $\delta_C$ of a set $C$ is defined as $\delta_C(z)=0$ for $z\in C$ and $\delta_C(z)=+\infty$ otherwise.

\subsection{Minimax problems as monotone inclusions}%
If the coupling function $\Phi$ is convex-concave and differentiable then the necessary and sufficient optimality condition can be written as a so-called \emph{monotone inclusion} using
\begin{equation}
  \label{eq:F-from-saddle}
  F(x,y) := (\nabla_x\Phi(x, y), -\nabla_y\Phi(x, y))
\end{equation}
and the \emph{normal cone} $N_\Omega$ of the convex set $\Omega:= X \times Y$.
By denoting $w=(x,y)\in\R^m$ where $m=d+n$, it reads
\begin{equation}
  \label{eq:mon-incl}
  0 \in F(w) + N_\Omega(w).
\end{equation}
The normal cone mapping is given by
\begin{equation}
  N_\Omega(w)=\{v\in \R^m : \langle v,w'-w \rangle\le0 \quad \forall w'\in\Omega\},
\end{equation}
for $w\in\Omega$ and $ N_\Omega(w) = \emptyset $ for $ w \notin \Omega $.
Here, the operators $F$ and $N_\Omega$ satisfy well known properties from convex analysis~\cite{bc}, in particular the first one is monotone (and Lipschitz if $\nabla\Phi$ is so) whereas the latter one is maximal monotone.
We call a, possibly \emph{set-valued}, operator $A$ from $\R^m$ to itself monotone~\cite{bc} if
\begin{equation}
  \langle u - u', z - z'\rangle \ge 0 \quad \forall u\in A(z), u' \in A(z').
\end{equation}
We say $A$ is maximal monotone, if there exists no monotone operator $A'$ such that the graph of $A$ is properly contained in the graph of $A'$.

Problems of type~\eqref{eq:mon-incl} have been studied thoroughly in convex optimization, with the most established solution methods being \textit{Extragradient (aka Korpelevich)}~\cite{extragradient} and \textit{Forward-Backward-Forward (aka Tseng)}~\cite{tseng}.
Both methods are known to generate sequences of iterates converging to a solution of~\eqref{eq:mon-incl}.
Note that in the unconstrained setting (i.e.\ if $\Omega$ is the entire space) both of these algorithms even produce the same iterates.

\subsection{Solving monotone inclusions}%
\label{sec:methods}
The connection between monotone inclusions and saddle point problems is of course not new.
The application of Extragradient (EG) to minimax problems has been studied in the seminal paper~\cite{mirror-prox} under the name of \textit{Mirror Prox} and a convergence rate of $\mathcal{O}(\nicefrac1k)$ in terms of the function values has been proven.
Even a stochastic version of the Mirror Prox algorithm has been studied in~\cite{stoch-mirror-prox} with a convergence rate of $\mathcal{O}(\nicefrac{1}{\sqrt{k}})$. Applied to problem~\eqref{eq:mon-incl}, with $P_\Omega$ being the projection onto $ \Omega $, it iterates
\begin{equation}
  \label{eq:eg}
  \text{EG:}
  \left\lfloor \begin{array}{l}
           w_k = P_\Omega[z_k - \alpha_k F(z_k)] \\
           z_{k+1} = P_\Omega[z_k - \alpha_k F(w_k)].
         \end{array}\right.
\end{equation}
The Forward-Backward-Forward (FBF) method has not been studied rigorously for minimax problems yet, despite promising applications in~\cite{bot-michael-rifbf} and its advantage of it only requiring one projection, whereas EG needs two. It is given by
\begin{equation}
  \label{eq:fbf}
  \text{FBF:}
  \left\lfloor \begin{array}{l}
           w_k = P_\Omega[z_k - \alpha_k F(z_k)] \\
           z_{k+1} = w_k + \alpha_k (F(z_k) - F(w_k)).
         \end{array}\right.
\end{equation}
Both, EG and FBF, have the ``disadvantage'' of needing two gradient evaluations per iteration. A possible remedy --- suggested in~\cite{VIP-GAN-gidel} for EG under the name of \emph{extrapolation from the past} --- is to recycle previous gradients.
In a similar fashion we introduce
\begin{equation}
  \label{eq:fbf-past}
  \text{FBFp:}
  \left\lfloor \begin{array}{l}
           w_k = P_\Omega[z_k - \alpha_k F(w_{k-1})] \\
           z_{k+1} = w_k + \alpha_k (F(w_{k-1}) - F(w_k)),
         \end{array}\right.
\end{equation}
where we replaced $F(z_k)$ by $F(w_{k-1})$ twice in~\eqref{eq:fbf}.
As a matter of fact, the above method can be written exclusively in terms of the first variable $w_k$ by incrementing the index $k$ in the first update and then substituting in the second line. This results in
\begin{equation}
  \label{eq:yura-reflection}
  w_{k+1} = P_\Omega\Big[w_k - \alpha_{k+1} F(w_k) +\alpha_k(F(w_{k-1}) - F(w_k))\Big].
\end{equation}
This way we rediscover a known method which was studied in~\cite{yura-mat-reflection} for general monotone inclusions under the name of \emph{forward-reflected-backward}. It reduces to \emph{optimistic mirror descent}~\cite{omd,omd2} in the unconstrained case with constant step size $\alpha_k=\alpha$, giving
\begin{equation}
  \label{eq:optimistic-step}
  w_{k+1} = w_k - \alpha (2F(w_k) - F(w_{k-1}))
\end{equation}
which has been proposed for the training of GANs under the name of \emph{Optimistic Gradient Descent Ascent (OGDA)}, see~\cite{OGDA,OGDA2,OGDA3}.

All of the above methods and extensions rely solely on the monotone operator formulation of the saddle point problem where the two components $x$ and $y$ play a symmetric role. Taking the special minimax structure into consideration,~\cite{aybat-apd} showed convergence of a method that uses an optimistic step~\eqref{eq:optimistic-step} in one component and a regular gradient step in the other, thus requiring less storing of past gradients in comparison to~\eqref{eq:yura-reflection}.

On the downside, however, by reducing the number of required gradient evaluations per iteration, the largest possible step size is reduced from $\nicefrac{1}{L}$ (see~\cite{extragradient} or Section~\ref{sec:main}) to $\nicefrac{1}{2L}$ (see~\cite{VIP-GAN-gidel,yura-mat-reflection,yura-reflection} or Section~\ref{sec:main}). To summarize, the number of required gradient evaluations is halved, but so is the step size, resulting in no clear net gain.

\subsection{Regularizers}%
\label{sub:regularizers}

The role of regularizers is well studied in many fields such as statistics~\cite{lasso}, signal processing~\cite{signal-processing} or inverse problems~\cite{ROF-TV-denoising}.
They serve different purposes such as inducing sparsity in the solution or conditioning of the problem.
In the context of deep learning this has been explored from different perspectives, e.g.\ in incremental convex neural networks where neurons with zero weights are removed from the network and new ones are inserted according to different policies, see~\cite{bach-convex-NN,convex-NN,l1-regularization-NN,nonconvex-sparse-NN}.

In the framework of monotone operator theory the optimality condition of the regularized minimax problem~\eqref{eq:stoch-minimax-reg} can be written as
\begin{equation}
  \label{eq:mon-incl-reg}
  0 \in F(w) +  \partial r(w),
\end{equation}
where $r$ is given by $(x,y) \mapsto f(x)+h(y)$. The possibly set-valued operator $\partial r$ denotes the subdifferential of $ r $ and is given by
\begin{equation}
  \partial r(w) := \{v \in \R^m : \langle v, w' - w \rangle + r(w) \le r(w') \quad \forall w'\in\R^m \}.
\end{equation}
The monotone inclusion~\eqref{eq:mon-incl-reg} generalizes~\eqref{eq:mon-incl} in a natural way, since $N_\Omega = \partial \delta_\Omega$.
Similarly, the projection constitutes a special case of the so-called \emph{proximal mapping} which for the function $r$ and $ \lambda > 0 $ is given by
\begin{equation}
  \prox{\lambda r}{w} := \argmin_{w'\in \R^m}\Big\{ r(w') + \frac{1}{2 \lambda} \Vert w' - w \Vert^2\Big\}.
\end{equation}
In particular, the proximal mapping of the indicator $\delta_\Omega$ yields the projection onto the set $\Omega$, i.e.\ $\operatorname{prox}_{\lambda \delta_\Omega} = P_\Omega $.

\section{Main results}%
\label{sec:main}

Motivated by the considerations above we study the inclusion problem
\begin{equation}
  \label{eq:main}
  0 \in F(w) + \partial r(w),
\end{equation}
where $F:\R^m \to \R^m$ is a monotone and Lipschitz operator and $r:\R^m \to \R\cup\{+\infty\}$ is a proper convex lower semicontinuous function.

\subsection{Measure of optimality}%
\label{sub:measure-of-optimality}
There are two common quantities measuring the quality of a point with respect to the monotone inclusion~\eqref{eq:mon-incl-reg}. The most natural one is the distance to the solution set for which typically only asymptotic convergence can be proved. We will therefore focus on the following \emph{gap function}, given for any $w\in\R^m$ by
\begin{equation}
  \sup_{z \in \R^m}\, \langle F(z), w-z \rangle + r(w) - r(z),
\end{equation}
for which we will be able to prove quantitative convergence rates.
If $r$ is the indicator $\delta_\Omega$ of the compact and convex set $\Omega$ it is clear that the supremum is only taken over $z\in\Omega$ and and will thus be finite.
Since the problem~\eqref{eq:main} is in general unconstrained and the supremum can be infinite we consider instead, as done in e.g.~\cite{nesterov2007dual}, the \emph{restricted} gap where the above supremum is taken over an auxiliary compact set $B\subset\R^m$ instead of the entire space.
Note that the restricted gap is in general only a reasonable measure of optimality for elements of $B$. It is nonnegative on $B$ and zero for points of $B$ which solve~\eqref{eq:main}.
Additionally we want to be able to conclude that if a point $w^*$ has zero gap it solves~\eqref{eq:main}. This is for example the case if $w^*$ is in the interior of $B$, which can always be ensured if $B$ is chosen large enough.

If $F$ arises from a saddle point problem~\eqref{eq:stoch-minimax-reg} meaning that $F$ has the form~\eqref{eq:F-from-saddle}, we want to use a more problem specific measure, the minimax gap, which for a point $w=(u,v)\in\R^d\times\R^n$ is given by
\begin{equation}
  \sup_{(x,y)\in B} \Psi(u,y) - \Psi(x,v).
\end{equation}
This minimax gap fulfills the same properties of being nonnegative on $B$ and zero for solutions of~\eqref{eq:main}. In order to capture both at the same time we define the following unifying gap
\begin{equation}
  \label{eq:unifying-gap}
  G_B(w) := \begin{cases}
    \sup_{(x,y)\in B}\, \Psi(u,y) - \Psi(x,v)  &  \text{if $F$ and $r$ come from~\eqref{eq:stoch-minimax-reg}} \\
    \sup_{z\in B} \,  \langle F(z),w-z \rangle + r(w) - r(z) &  \text{otherwise}.
  \end{cases}
\end{equation}

\subsection{Methods}%
We now present a novel unifying scheme for solving problem~\eqref{eq:main}, which generalizes FBF~\eqref{eq:fbf} and in addition recovers the method motivated in~\eqref{eq:fbf-past} as FBFp.
Let us point out again that the latter algorithm was already introduced in~\cite{yura-mat-reflection} and corresponds to OGDA~\cite{omd,OGDA,OGDA2} if $F$ stems from the minimax setting~\eqref{eq:F-from-saddle}.

\begin{algo}[generalized FBF]%
  \label{alg:generalized-det-fbf}
  For a starting point $z_0 \in \R^m$ and step sizes $\alpha_k>0$ we consider for all $k \geq 0$
  \begin{equation}
    \left\lfloor \begin{array}{l}
             w_k = \prox{\alpha_k r}{z_k - \alpha_k F(\diamondsuit_k)} \\
             z_{k+1} = w_k + \alpha_k (F(\diamondsuit_k) - F(w_k)).
           \end{array}\right.
       \end{equation}
       For $\diamondsuit_k=z_k$ this reduces to the well known FBF method, whereas $\diamondsuit_k=w_{k-1}$, with the additional initial condition $w_{-1}=z_0$, recycles previous gradients (FBFp).
\end{algo}
Consider the scenario where $F$ is given as an expectation $\E_\xi[F(\cdot\,;\xi)]$, e.g.\ coming from~\eqref{eq:stoch-minimax-reg}, and only a stochastic estimator $F(\cdot\, ;\xi)$ is accessible instead of $F$ itself.
In this case we adapt Algorithm~\ref{alg:generalized-det-fbf} in the following way.
\begin{algo}[generalized stochastic FBF]%
  \label{alg:generalized-stoch-fbf}
  For a starting point $z_0 \in \R^m$ and step sizes $\alpha_k>0$ we consider for all $k \geq 0$
  \begin{equation*}
    \left\lfloor \begin{array}{l}
             \xi_k \sim Q \quad(\text{optionally } \eta_k \sim Q)\\
             w_k = \prox{\alpha_k r}{z_k - \alpha_k F(\diamondsuit_k;\triangle_k)} \\
             z_{k+1} = w_k + \alpha_k (F(\diamondsuit_k;\triangle_k) - F(w_k;\xi_k)).
           \end{array}\right.
    \end{equation*}
    For $\diamondsuit_k=z_k$ and $\triangle_k=\eta_k$ this results in a stochastic version of FBF, whereas $\diamondsuit_k=w_{k-1}$ and $\triangle_k = \xi_{k-1}$ recycles previous gradients (stochastic FBFp) with the additional initial condition $w_{-1}=z_0$ and $\xi_{-1}=\eta_0$.
\end{algo}
Even though both methods encompassed by the unifying scheme Algorithm~\ref{alg:generalized-det-fbf} have been studied in the deterministic setting before, the stated convergence results are new. However, we want to point out that the stochastic version of FBFp has not been considered prior to this work.

\subsection{Convergence}%

Let in the following $B \subset \R^m$ be the compact set of the restricted (unifying) gap function~\eqref{eq:unifying-gap} with $D:= \sup_{w,z\in B} \Vert z-w \Vert$ denoting its diameter. For convenience in the estimation we assume that the starting point $z_0$ of the discussed methods is in $B$.

\begin{theorem}[deterministic]%
  \label{thm:deterministic}
  Let ${(w_k)}_{k\ge0}$ be the sequence generated by Algorithm~\ref{alg:generalized-det-fbf}. If
  \begin{enumerate}[(i)]
    \item FBF, i.e.\ $\diamondsuit_k=z_k$, with step size $\alpha_k=\alpha \le \nicefrac{1}{L}$, or
    \item FBFp, i.e.\ $\diamondsuit_k=w_{k-1}$, with step size $\alpha_k=\alpha \le \nicefrac{1}{2L}$
  \end{enumerate}
  is chosen, then for all $K\ge 1$ the averaged iterates $\bar{w}_K := \frac{1}{K}\sum_{k=0}^{K-1} w_k \,$ fulfill
  \begin{equation}
    G_{B}(\bar{w}_K) \le \frac{D^2}{2\alpha K},
  \end{equation}
  where $G_B$ is the restricted gap defined in~\eqref{eq:unifying-gap}.
\end{theorem}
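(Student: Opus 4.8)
The plan is to treat both instantiations $\diamondsuit_k = z_k$ (FBF) and $\diamondsuit_k = w_{k-1}$ (FBFp) simultaneously by exploiting the shared structure of Algorithm~\ref{alg:generalized-det-fbf}. First I would extract the variational inequality characterizing the prox step: from $w_k = \prox{\alpha_k r}{z_k - \alpha_k F(\diamondsuit_k)}$ we get, for every $z \in \R^m$,
\begin{equation}
  \langle z_k - w_k - \alpha_k F(\diamondsuit_k), z - w_k\rangle + \alpha_k r(w_k) \le \alpha_k r(z).
\end{equation}
Rewriting $z_k = z_{k+1} - w_k + \alpha_k(F(w_k) - F(\diamondsuit_k)) + w_k$... more cleanly, using $z_{k+1} = w_k + \alpha_k(F(\diamondsuit_k) - F(w_k))$ to eliminate $z_k$, I would rearrange the prox inequality into the form
\begin{equation}
  \alpha_k\langle F(w_k), w_k - z\rangle + \alpha_k\big(r(w_k) - r(z)\big) \le \langle z_k - z_{k+1}, z - w_k\rangle - \alpha_k\langle F(\diamondsuit_k) - F(w_k), w_k - z\rangle,
\end{equation}
and then handle the inner product $\langle z_k - z_{k+1}, z - w_k\rangle$ via the three-point identity $\langle z_k - z_{k+1}, z - w_k\rangle = \tfrac12\|z_k - z\|^2 - \tfrac12\|z_{k+1} - z\|^2 + \tfrac12\|z_{k+1} - w_k\|^2 - \tfrac12\|z_k - w_k\|^2$. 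The term $\tfrac12\|z_{k+1} - w_k\|^2 = \tfrac{\alpha_k^2}{2}\|F(\diamondsuit_k) - F(w_k)\|^2$ is where Lipschitzness enters.

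The key step is then to absorb the error term $-\alpha_k\langle F(\diamondsuit_k) - F(w_k), w_k - z\rangle$ together with $\tfrac{\alpha_k^2}{2}\|F(\diamondsuit_k) - F(w_k)\|^2$ into the telescoping quantity $\tfrac12\|z_k - w_k\|^2$ (and, in the FBFp case, a companion term $\tfrac12\|z_k - w_{k-1}\|^2$ or $\tfrac{\alpha_k^2}{2}\|F(w_{k-1}) - F(w_k)\|^2$ that shifts index). In the FBF case $\diamondsuit_k = z_k$, so $\|F(\diamondsuit_k) - F(w_k)\| \le L\|z_k - w_k\|$ and $\langle F(z_k) - F(w_k), w_k - z\rangle$ is split by Young's inequality so that with $\alpha_k = \alpha \le 1/L$ the negative $-\tfrac12\|z_k - w_k\|^2$ dominates, leaving a clean telescoping bound. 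In the FBFp case $\diamondsuit_k = w_{k-1}$, so I would use $\|F(w_{k-1}) - F(w_k)\| \le L\|w_{k-1} - w_k\| \le L(\|w_{k-1} - z_k\| + \|z_k - w_k\|)$ (or directly bound via both $\|z_k - w_k\|$ and $\|z_k - w_{k-1}\|$), which is exactly why the admissible step size halves to $\alpha \le 1/(2L)$ — one factor of $L$ is "spent" splitting the error across two consecutive iterates, and a Lyapunov/telescoping argument with an augmented potential $\tfrac12\|z_k - z\|^2 + c\|z_{k-1} - w_{k-1}\|^2$ closes it. Summing the resulting per-iteration inequality over $k = 0, \dots, K-1$, telescoping the $\|z_k - z\|^2$ terms, and using $z_0 \in B$ plus $\|z_0 - z\| \le D$ yields $\sum_{k=0}^{K-1}\big(\langle F(w_k), w_k - z\rangle + r(w_k) - r(z)\big) \le \tfrac{D^2}{2\alpha}$ for every $z \in B$.

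Finally I would convert this aggregate bound into a gap bound on $\bar w_K$. By convexity of $r$, $r(\bar w_K) \le \tfrac1K\sum_k r(w_k)$, and by monotonicity of $F$, $\langle F(z), w_k - z\rangle \le \langle F(w_k), w_k - z\rangle$; hence $\tfrac1K\sum_k\langle F(w_k), w_k - z\rangle \ge \langle F(z), \bar w_K - z\rangle$, so $\langle F(z), \bar w_K - z\rangle + r(\bar w_K) - r(z) \le \tfrac{D^2}{2\alpha K}$, and taking $\sup_{z \in B}$ gives the "otherwise" branch of $G_B$. For the saddle-point branch, when $F$ has the form~\eqref{eq:F-from-saddle} I would instead keep $F(w_k)$ paired with $w_k$ and use convexity-concavity of $\Phi$ directly: writing $w_k = (u_k, v_k)$ and $z = (x, y)$, the inner product $\langle F(w_k), w_k - z\rangle + r(w_k) - r(z)$ lower-bounds $\Psi(u_k, y) - \Psi(x, v_k)$ by the gradient inequality in $x$ (convex) and $y$ (concave), so Jensen again yields $\Psi(\bar u_K, y) - \Psi(x, \bar v_K) \le \tfrac1K\sum_k(\Psi(u_k,y) - \Psi(x,v_k))$ and the same $\tfrac{D^2}{2\alpha K}$ bound after $\sup_{(x,y)\in B}$.

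The main obstacle I anticipate is the FBFp bookkeeping: making the index-shifted terms ($F(w_{k-1})$ evaluated at "time $k$") telescope cleanly requires choosing the right Lyapunov potential and carefully tracking the boundary term at $k = 0$ via the initialization $w_{-1} = z_0$, and verifying that $\alpha \le 1/(2L)$ is exactly the threshold that keeps all the leftover squared-norm terms nonpositive. The FBF case is comparatively routine once the prox inequality is set up.
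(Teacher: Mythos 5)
Your overall architecture coincides with the paper's: the prox optimality condition, a three-point identity, the Lipschitz bound on $\|z_{k+1}-w_k\|^2=\alpha_k^2\|F(\diamondsuit_k)-F(w_k)\|^2$, an index-shifted Lyapunov term for FBFp built on $\|w_k-w_{k-1}\|^2\le 2\|z_k-w_k\|^2+2\|z_k-w_{k-1}\|^2$ with $\|z_k-w_{k-1}\|=\alpha_{k-1}\|F(w_{k-1})-F(w_{k-2})\|$ and the initialization $w_{-1}=z_0$, and a final lemma converting $\frac1K\sum_k g(w_k,z)$ into $G_B(\bar w_K)$ via monotonicity in the general case and convex-concavity plus Jensen in the saddle-point case. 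The thresholds $\alpha\le\nicefrac1L$ and $\alpha\le\nicefrac{1}{2L}$ arise exactly where you predict.

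However, your central displayed inequality is wrong in a way that matters. Adding $\alpha_k\langle F(w_k),w_k-z\rangle$ to the prox inequality yields
$\alpha_k g(w_k,z)\le\langle z_{k+1}-z_k,\, z-w_k\rangle$
with \emph{no} residual cross term: the term $\alpha_k\langle F(\diamondsuit_k)-F(w_k),\cdot\rangle$ is absorbed exactly by the second line of the algorithm, $z_{k+1}=w_k+\alpha_k(F(\diamondsuit_k)-F(w_k))$ --- that is the entire point of the correction step. Your version retains a leftover term $-\alpha_k\langle F(\diamondsuit_k)-F(w_k),w_k-z\rangle$ (and your three-point identity carries the opposite sign, so the two slips do not cancel), and you propose to dispose of it by Young's inequality. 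That step would fail: Young's inequality applied to this cross term produces a contribution $\tfrac{\epsilon}{2}\|w_k-z\|^2$ for the arbitrary comparison point $z\in B$, which has no negative counterpart to absorb it and, summed over $k$, adds $O(\epsilon K D^2)$, destroying the $\nicefrac1K$ rate. The repair requires no new idea, only a recomputation: with the signs tracked correctly the cross term vanishes identically, no Young's inequality is needed in the deterministic setting, and the sole error term is $\tfrac12\|z_{k+1}-w_k\|^2\le\tfrac{\alpha_k^2L^2}{2}\|\diamondsuit_k-w_k\|^2$, after which your telescoping and gap-conversion steps go through exactly as in the paper.
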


In order to derive similar convergence statements for the stochastic algorithm we need to assume (standard) properties of the gradient estimator $F(\cdot\,;\xi)$.
\begin{assumption}%
  \label{ass:unbiased}
  Unbiasedness: $\E_\xi [F(w;\xi)] = F(w) \, \forall w \in \R^m$.
\end{assumption}

\begin{assumption}%
  \label{ass:bounded-var}
  Bounded variance: $\E_\xi [\Vert F(w;\xi) - F(w) \Vert^2] \le \sigma^2 \, \forall w \in \R^m$.
\end{assumption}

In particular we actually only need the above assumption to hold for all iterates $w_k$. 
Such an hypothesis is in practice difficult to check, but could be exploited in special cases where additional properties of the variance and boundedness of the iterates are known a priori.

\begin{assumption}%
  \label{ass:independent}
  The samples $\xi_k$ are independent of the iterates $w_k$, for all $k\ge0$.
\end{assumption}
Equipped with these assumptions we are now able to proof the statement.

\begin{theorem}[stochastic]%
  \label{thm:stoch}
  Let Assumption~\ref{ass:unbiased},~\ref{ass:bounded-var} and~\ref{ass:independent} hold and let ${(w_k)}_{k\ge0}$ be the sequence generated by Algorithm~\ref{alg:generalized-stoch-fbf}. If
  \begin{enumerate}[(i)]
    \item stochastic FBF, i.e.\ $\diamondsuit_k=z_k$ and $\triangle_k=\eta_k$, with step size $\alpha_k \le \alpha  \le \nicefrac{1}{\sqrt{2}L}$, or
    \item stochastic FBFp, i.e.\ $\diamondsuit_k=w_{k-1}$ and $\triangle_k=\xi_{k-1}$, with step size $\alpha_k \le \alpha \le \nicefrac{1}{3L}$
  \end{enumerate}
  is chosen, then for all $K\ge 1$ the averaged iterates $\bar{w}_K := \frac{\sum_{k=0}^{K-1} \alpha_k w_k}{\sum_{k=0}^{K-1} \alpha_k} \,$ fulfill
  \begin{equation}
    \Exp{G_B(\bar{w}_K)} \le \frac{D^2 + 18\sigma^2\sum_{k=0}^{K-1}\alpha^2_k}{2\sum_{k=0}^{K-1}\alpha_k},
  \end{equation}
  where $G_B$ is the restricted gap defined in~\eqref{eq:unifying-gap}.
\end{theorem}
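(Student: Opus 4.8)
The plan is to mirror the deterministic analysis of Theorem~\ref{thm:deterministic} and track the extra error terms introduced by the stochastic gradient estimator. The starting point is a one-step descent-type inequality for the squared distance $\|z_{k+1}-z\|^2$ to an arbitrary anchor point $z\in B$. Writing the proximal step $w_k=\prox{\alpha_k r}{z_k-\alpha_k F(\diamondsuit_k;\triangle_k)}$ via its variational characterization $\langle z_k-\alpha_k F(\diamondsuit_k;\triangle_k)-w_k, z-w_k\rangle + \alpha_k r(w_k)\le \alpha_k r(z)$, and substituting the correction step $z_{k+1}=w_k+\alpha_k(F(\diamondsuit_k;\triangle_k)-F(w_k;\xi_k))$, I would expand $\|z_{k+1}-z\|^2=\|z_{k+1}-w_k\|^2+2\langle z_{k+1}-w_k,w_k-z\rangle+\|w_k-z\|^2$ and combine terms to obtain an inequality of the schematic form
\begin{equation*}
\|z_{k+1}-z\|^2 \le \|z_k-z\|^2 - 2\alpha_k\big(\langle F(w_k),w_k-z\rangle + r(w_k)-r(z)\big) + (\text{Lipschitz remainder}) + (\text{noise terms}).
\end{equation*}
The Lipschitz remainder is controlled, as in the deterministic proof, by the step-size restriction: for FBF one uses $\|\diamondsuit_k-w_k\|=\|z_k-w_k\|$ together with $\alpha_k L\le 1/\sqrt2$ (the factor $\sqrt2$ rather than $1$ being the price for a stochastic cross term), and for FBFp one telescopes the extra $\|F(w_{k-1})-F(w_k)\|$-type terms across consecutive iterations, which forces $\alpha_k L\le 1/3$.

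The noise terms split into two categories. First, martingale-difference cross terms of the form $\langle F(w_k)-F(w_k;\xi_k),\,w_k-z\rangle$ and $\langle F(\diamondsuit_k)-F(\diamondsuit_k;\triangle_k),\,z_k-z\rangle$ (or $w_{k-1}-z$): here I invoke Assumptions~\ref{ass:unbiased} and~\ref{ass:independent} so that, conditioning on the $\sigma$-algebra generated by the past, these vanish in expectation — this is precisely where independence of $\xi_k$ from $w_k$ is needed, so that $\Exp{\langle F(w_k)-F(w_k;\xi_k),w_k-z\rangle}=0$. Second, squared error terms $\alpha_k^2\|F(\diamondsuit_k;\triangle_k)-F(w_k;\xi_k)-(F(\diamondsuit_k)-F(w_k))\|^2$ arising from $\|z_{k+1}-w_k\|^2$; bounding $\|a+b\|^2\le 2\|a\|^2+2\|b\|^2$ and applying Assumption~\ref{ass:bounded-var} (twice, once per sample) yields a contribution on the order of $\alpha_k^2\sigma^2$. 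Being slightly wasteful with constants here — and accounting for the FBFp case where $\diamondsuit_k=w_{k-1}$ couples an additional past noise term — is what produces the $18\sigma^2$ coefficient in the bound.

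Next I would multiply the one-step inequality by the appropriate weighting (here the steps are weighted by $\alpha_k$ because $\bar w_K$ is the $\alpha_k$-weighted average), take total expectation, telescope from $k=0$ to $K-1$, and drop the nonnegative terminal term $\Exp{\|z_K-z\|^2}$. This gives
\begin{equation*}
2\sum_{k=0}^{K-1}\alpha_k\,\Exp{\langle F(w_k),w_k-z\rangle + r(w_k)-r(z)} \le \|z_0-z\|^2 + 18\sigma^2\sum_{k=0}^{K-1}\alpha_k^2 \le D^2 + 18\sigma^2\sum_{k=0}^{K-1}\alpha_k^2,
\end{equation*}
using $z_0,z\in B$. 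The final step converts the left-hand side into the gap at $\bar w_K$. In the generic case I use monotonicity of $F$ in the form $\langle F(w_k),w_k-z\rangle\ge\langle F(z),w_k-z\rangle$ and then Jensen/linearity to pass the weighted average inside $\langle F(z),\cdot-z\rangle + r(\cdot)-r(z)$; in the saddle-point case I instead use convexity–concavity of $\Psi$ to bound $\Psi(\bar u_K,y)-\Psi(x,\bar v_K)$ by the weighted average of the per-iterate expressions, exactly as in the deterministic theorem. Taking the supremum over $z\in B$ (resp.\ $(x,y)\in B$) and dividing by $2\sum_{k=0}^{K-1}\alpha_k$ yields the claimed bound. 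The main obstacle, and the only genuinely delicate point, is the bookkeeping in the FBFp case: because $\diamondsuit_k=w_{k-1}$, the recycled stochastic gradient $F(w_{k-1};\xi_{k-1})$ appears in step $k$ while $F(w_{k-1};\xi_{k-1})$ also appeared in the correction of step $k-1$, so one must carefully set up a Lyapunov/potential function that absorbs the lagged $\alpha_{k-1}\alpha_k\|F(w_{k-1})-F(w_k)\|$ cross terms and simultaneously keep the conditioning structure clean enough that all martingale terms still vanish — this is what tightens the step size to $1/(3L)$ and is the step I would write out most carefully.
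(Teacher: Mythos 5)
Your proposal follows essentially the same route as the paper: a one-step inequality from the prox characterization plus the correction step, Young's inequality to split the $\Vert z_{k+1}-w_k\Vert^2$ term into a Lipschitz part and an $\mathcal{O}(\alpha_k^2\sigma^2)$ noise part, vanishing martingale cross terms via Assumptions~\ref{ass:unbiased} and~\ref{ass:independent}, a lagged Lyapunov term absorbing $\Vert w_k-w_{k-1}\Vert^2$ for FBFp (which is exactly what forces $\alpha\le\nicefrac{1}{3L}$ and produces the constant $18$), and finally telescoping and passing to $G_B(\bar w_K)$ by monotonicity/convexity--concavity as in Lemma~\ref{lem:small-g-gap}. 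The sketch is correct and matches the paper's argument (Proposition~\ref{prop:general} together with Theorems~\ref{thm:refined-stoch-fbf} and~\ref{thm:refined-stoch-fbfp}).
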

The above theorem exhibits a classical step size dependence~\cite{robbins-monro}, yielding convergence for sequences ${(\alpha_k)}_{k\ge0}$ that are square summable $\sum_{k=0}^{\infty} \alpha_k^2 < +\infty$ but not summable $\sum_{k=0}^{\infty}\alpha_k = +\infty$. Additionally, if in the setting of Theorem~\ref{thm:stoch} the step size is chosen $\alpha_k= \alpha/\sqrt{k+1}$, a convergence rate can be obtained and is given by
\begin{equation}
  \label{eq:stoch-rate}
  \Exp{G_B(\bar{w}_K)} = \mathcal{O}\Big(\frac{1}{\sqrt{K}}\Big).
\end{equation}
If the step size does not go to zero, the gap can usually not be expected to vanish either. However, we can still show decrease in the gap up to a residual stemming from the variance. In particular, for a constant step size $\alpha_k=\alpha$ we have
\begin{equation}
  \label{eq:stoch-constant}
  \Exp{G(\bar{w}_K)} \le \frac{D^2}{2\alpha K} + 9\sigma^2\alpha.
\end{equation}
Additionally, if the number of iterations $K$ is fixed beforehand, a conclusion similar to~\eqref{eq:stoch-rate} can be obtained by choosing $\alpha=\nicefrac{1}{\sqrt{K}}$ in~\eqref{eq:stoch-constant}.

\section{Experiments}%
\label{sec:experiments}

Due to the theoretical nature of this work, the aim of this section is rather to validate the results on standard examples and not to strive to achieve new state-of-the-art results. Instead we simply aim to show how the use of methods with convergence guarantees, albeit only in the monotone setting, can yield better training performance.

\subsection{2D toy example}%
\label{sec:toy}

Following~\cite{GAN2, VIP-GAN-gidel, mesched-GAN-converge} we consider the canonical example $\min_{x} \max_{y} \, xy$, which illustrates the cycling behavior of (even bilinear) minimax problems, and augment this approach by adding a nonsmooth L1-regularizer for one player, resulting in
\begin{equation}
  \label{eq:2d-toy}
  \min_{x\in\R} \max_{y\in[-1,1]}\, \kappa |x| + xy,
\end{equation}
with $\kappa > 0$.

Figure~\ref{fig:2d-toy} highlights the aforementioned issue of GDA (and its proximal extension PGDA) cycling around the solution. The other methods, for which we display the averaged iterates, however do converge to a solution and show a decrease in the restricted gap according to theory.
Even though the proximal steps provide improvement towards the solution $(0,0)$ and FBF only uses half the amount of evaluations compared to EG, it outperforms the competing algorithms.

\begin{figure}[h!]
  \centering
  \begin{subfigure}[b]{0.49\linewidth}
    \centering
    \includegraphics[width=\linewidth]{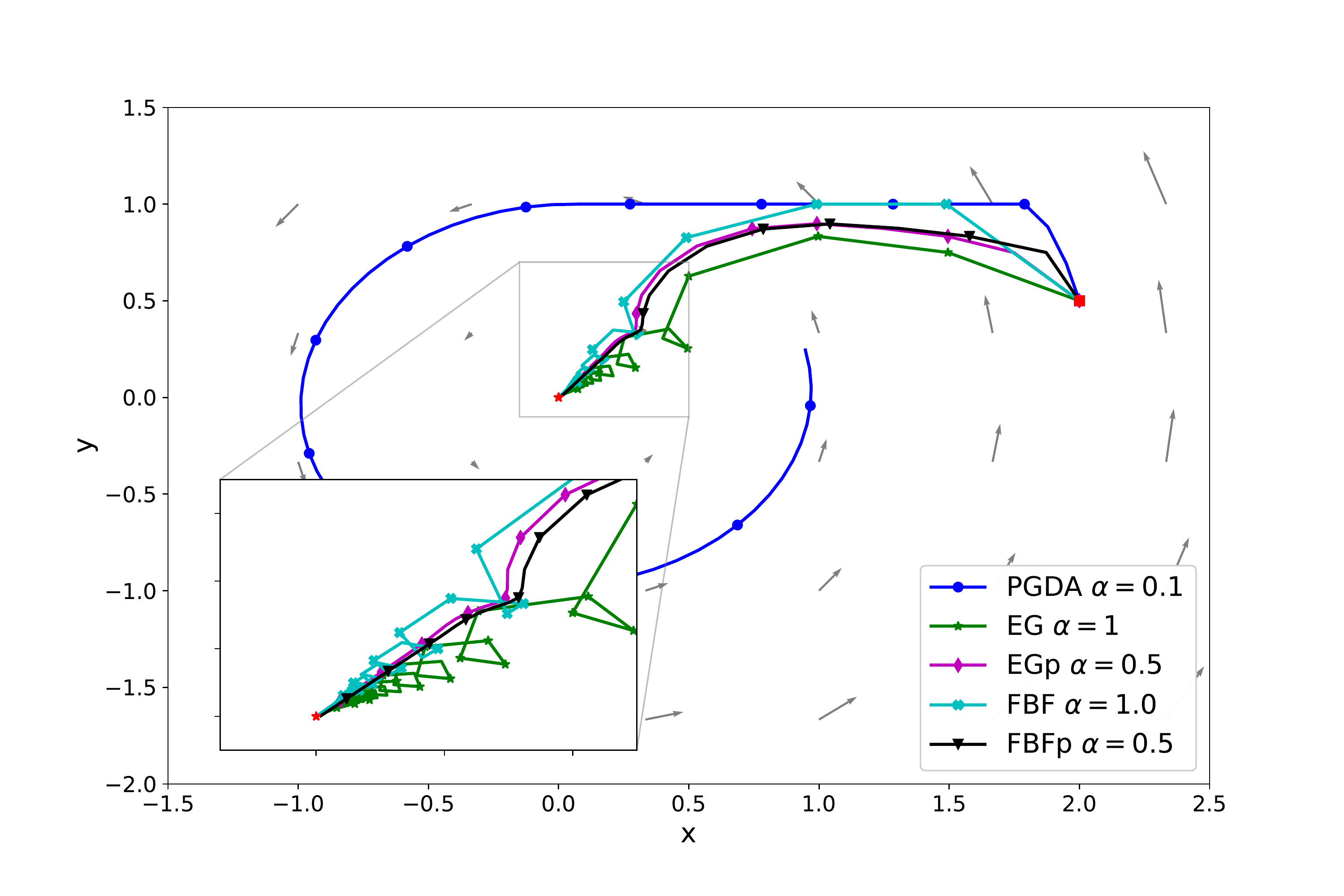}
    \caption{Trajectories converging to solution.}
  \end{subfigure}
  \begin{subfigure}[b]{0.49\linewidth}
    \centering
    \includegraphics[width=\linewidth]{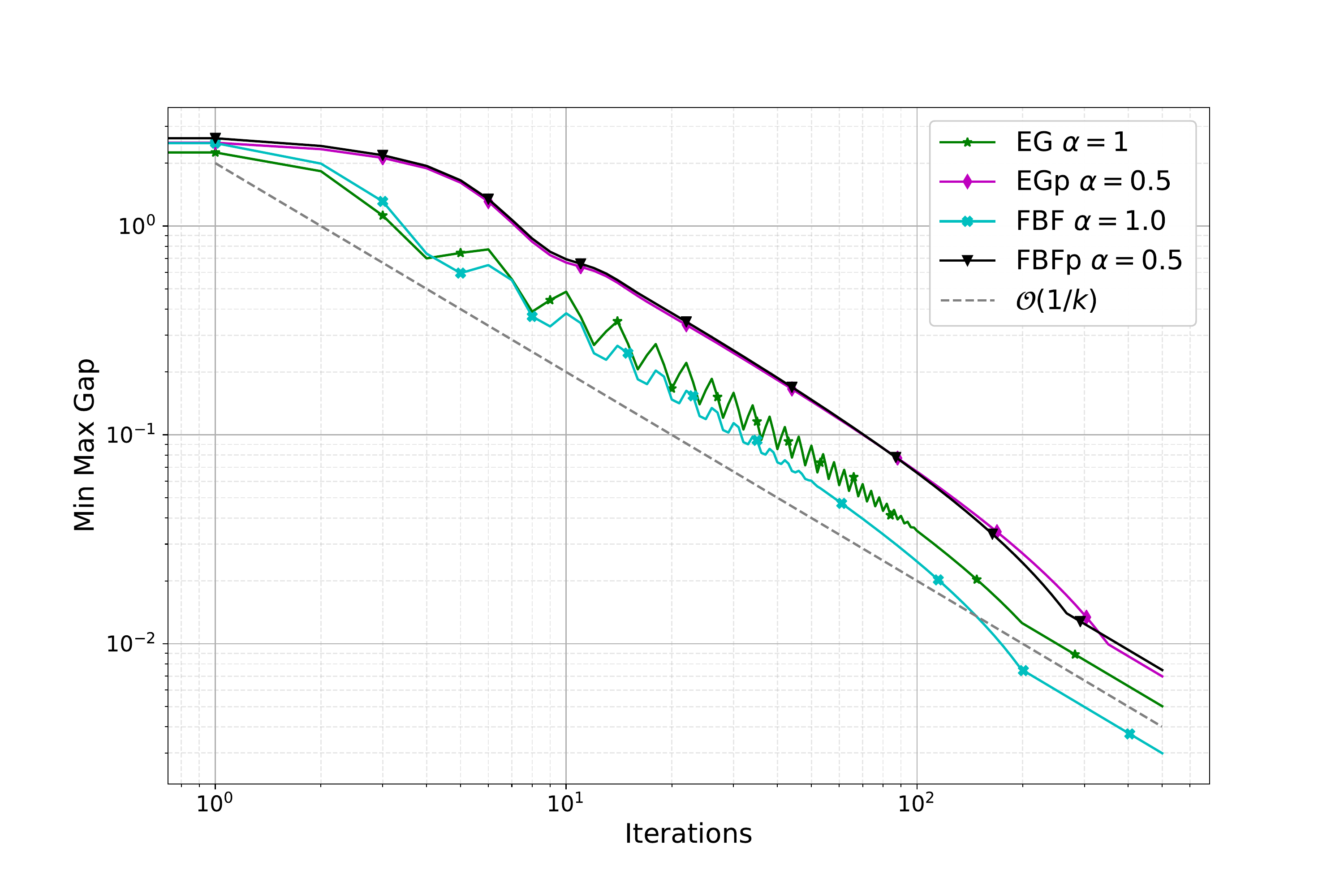}
    \caption{Restricted gap function.}
  \end{subfigure}
  \caption{A comparison of the methods presented in Section~\ref{sec:methods} applied to problem~\eqref{eq:2d-toy} with $\kappa=0.01$. \emph{PGDA} denotes (alternating) gradient descent ascent with proximal steps. As mentioned in the introduction it fails to converge. \emph{EGp} denotes the method presented in~\cite{VIP-GAN-gidel} as extrapolation from the past. For the restricted gap we use $B_1=B_2=[-1,1]$.}%
  \label{fig:2d-toy}
\end{figure}

\subsection{WGAN trained on CIFAR10}%
\label{sec:cifar}

In this section we apply the above proposed techniques from monotone inclusions to the training of Wasserstein GANs making use of the DCGAN architecture~\cite{dcgan}. All models are trained on the CIFAR10 dataset~\cite{cifar10} which consists of 60,000 images in 10 different classes (with 50,000 training images and 10,000 test images) using an NVIDIA RTX 2080Ti GPU.\

We choose to work with the original WGAN formulation including weight clipping, since it includes regularizers innately (the indicator of a box for the weights of the discriminator). Although more recent models like ones for example based on ResNet~\cite{resnet} or SAGAN~\cite{SAGAN} architectures provide better overall performance, they usually do not warrant the use of regularizers.
We do this to highlight the difference between FBF and EG, as without projections or proximal steps they are equivalent and their relevance including state-of-the-art architectures has already been shown~\cite{VIP-GAN-gidel, SVRG-gidel-SAGAN}.

In addition we propose a modification of the WGAN formulation which replaces the box constraint on the discriminator's weights with an L1-regularization, under the name of \emph{WGAN-L1}.
This results in a \emph{soft-thresholding} operation instead of the ``harsh'' clipping.

\begin{table}[h!]
  \centering
  \begin{tabular}{@{\extracolsep{4pt}}lcccc@{}}
    \hline 
    & \multicolumn{2}{c}{Inception Score (IS)} & \multicolumn{2}{c}{Fr{\'e}chet Inception Distance (FID)} \\
    \cline{2-3} \cline{4-5}
    & \multirow{2}{*}{clip} & \multirow{2}{*}{prox} & \multirow{2}{*}{clip} & \multirow{2}{*}{prox} \\
    Method &  & & & \\
    \hline 
    AltAdam1 & 4.12$\pm$0.06 & 4.43$\pm$0.03 & 56.44$\pm$0.62 & 50.86$\pm$2.17\\
    Extra Adam & 4.07$\pm$0.05 & 4.67$\pm$0.11 & 56.67$\pm$0.61 & 47.24$\pm$1.21\\
    \textbf{FBF Adam} & \textbf{4.54$\pm$0.04} & \textbf{4.68$\pm$0.16} & \textbf{45.85$\pm$0.35} & \textbf{46.60$\pm$0.76}\\
    Optimistic Adam & 4.35$\pm$0.06 & 4.63$\pm$0.13 & 50.41$\pm$0.46 & 47.98$\pm$1.49\\
    \hline 
  \end{tabular}
  \vspace{0.2cm}
  \caption{
    The best Inception Score (IS) and Fr{\'e}chet Inception Distance (FID), averaged over 5 runs. The column denoted by \emph{clip} refers the standard formulation WGANs where the weights of the discriminator are clipped after every gradient step to enforce the box constraint, whereas \emph{prox} refers alternative implementation using the $1$-norm of the weights for regularization. The latter provides improvement throughout all considered methods. For both formulations, the FBF method (with Adam update) yields the best results (higher IS and lower FID).}%
    \label{tab:is-fid}
\end{table}

Given the ubiquity and dominance of Adam~\cite{adam} as an optimizer for many deep learning related training tasks, instead of using vanilla SGD we opt for Adam updates. This results in a method we call \emph{FBF Adam}. Analogous approaches have been applied in~\cite{VIP-GAN-gidel} and~\cite{OGDA} resulting in \emph{Extra Adam} and \emph{Optimistic Adam}, respectively. We compare the aforementioned methods with the status-quo in GAN training, namely alternating one Adam step for each network: \emph{AltAdam1}.

Our hyperparameter search was limited to the step sizes when using the WGAN-L1 formulation, while all other parameters were kept the same as in~\cite{VIP-GAN-gidel, bot-michael-rifbf}.
It seems noteworthy that in the case of soft-thresholding bigger step sizes performed better with the only exception of AltAdam1.

\begin{figure}[h!]
  \begin{subfigure}[b]{0.33\linewidth}
    \centering
    \includegraphics[width=\linewidth]{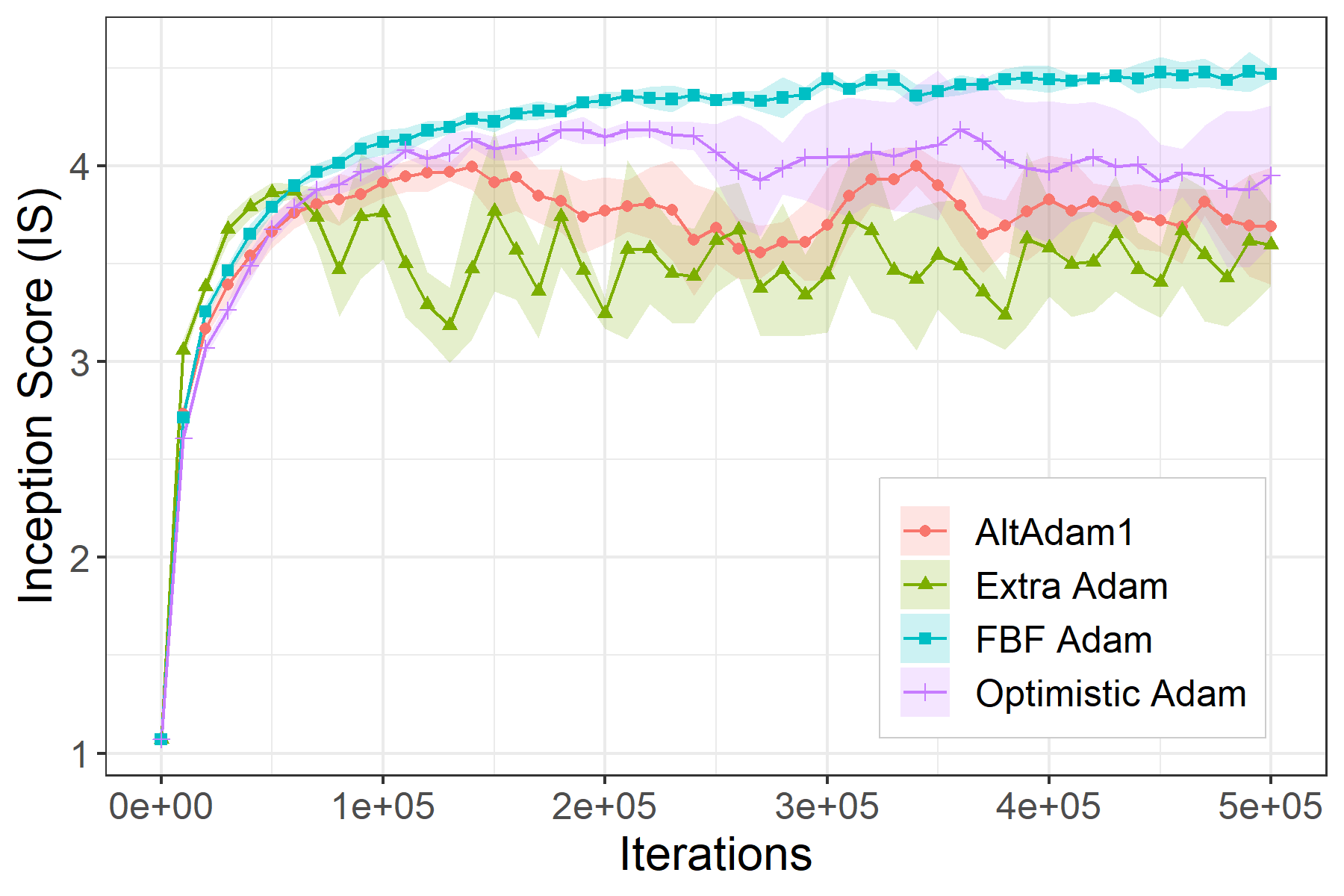}
  \end{subfigure}
  \begin{subfigure}[b]{0.3\linewidth}
    \centering
    \includegraphics[width=0.7\linewidth]{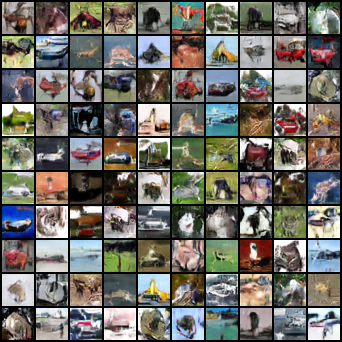}
  \end{subfigure}
  \begin{subfigure}[b]{0.33\linewidth}
    \centering
    \includegraphics[width=\linewidth]{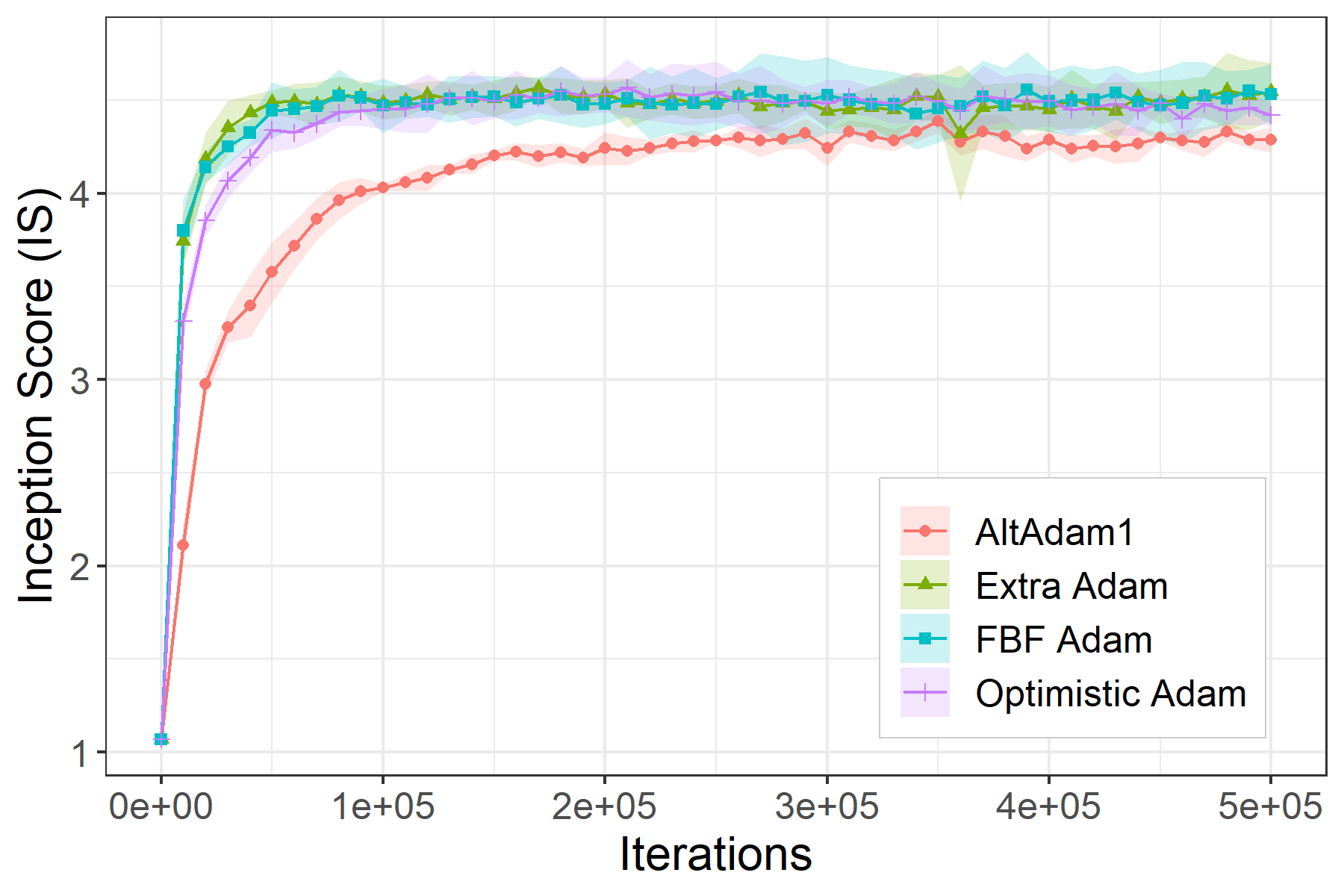}
  \end{subfigure}
  \caption{
    \textbf{Left:} Mean and standard deviation of the IS averaged over 5 runs on the WGAN objective with weight clipping. \textbf{Middle:} Samples from the DCGAN generator trained with the WGAN-L1 objective using the FBF method with Adam updates.
    \textbf{Right:} Mean and standard deviation of the IS averaged over 5 runs on the WGAN-L1 objective using the proximal operator;
  The WGAN-L1 objective improves the IS in comparison to weight clipping and stabilizes the behavior of all considered methods during the training procedure. The advantage of using FBF Adam is most pronounced in the case of weight clipping.}%
  \label{fig:gan}
\end{figure}

The two evaluation metrics used are the Inception Score (IS)~\cite{is-original} and the Fr{\'e}chet inception distance (FID)~\cite{fid}, both computed on 50,000 samples. In the case of the IS we use the updated and corrected implementation from~\cite{is-pytorch}. All results are averaged over $5$ runs for each method.

Table~\ref{tab:is-fid} reports the best IS and FID for each method. FBF Adam outperforms all considered competitors with respect to both evaluation metrics with the most significant difference for WGAN with weight clipping (``clip''). One can also see that WGAN-L1 using the proximal operator (``prox'') improves the performance of all considered methods, decreasing the absolute and relative differences. Note that the results with WGAN-L1 are comparable for the three methods with underlying convergence guarantees in the convex-concave case.
Figure~\ref{fig:gan} shows the training progress regarding IS for each method and both problem formulations. The graphs suggest that making use of WGAN-L1 objective has a stabilizing effect during training leading to a smoother and more consistent learning curve --- a property that only FBF Adam seems to exhibit for weight clipping.

\section{Conclusion}%
\label{sec:conclusion}

By highlighting the connection between GAN objectives and monotone inclusions, we are able to tackle their training via the Forward-Backward-Forward method which is known to converge to a solution for convex-concave minimax problems. We deepened this theoretical understanding by proving novel convergence rates in terms of the function values.
Since FBF provides a natural way to deal with nonsmooth regularizers via the proximal mapping, we modified the WGAN objective to encompass a $1$-norm instead of the usual weight clipping.
We showed that this formulation provides a benefit for all considered methods, smoothing the training process and improving Inception Score and Fr{\'e}chet Inception Distance.
Moreover FBF outperformed all competitors including the commonly used Gradient-Descent-Ascent method as well as other more principled schemes such as Extragradient or Optimistic GDA, where the Adam optimizer was used for all.
The rigorous theoretical considerations complemented by promising practical results suggest that application of FBF may be fruitful to a wider range of GAN formulations, leading to more reliable training results.

\section*{Acknowledgements}%
\label{sec:}

This project has received funding from the doctoral programme \textit{Vienna Graduate School on Computational Optimization (VGSCO)}, FWF (Austrian Science Fund), project W 1260, as well as project P 29809-N32.


\appendix
\renewcommand{\thesection}{\Alph{section}}
\newpage

\section{Definitions}%
\label{sec:defin}

In Section~\ref{sub:regularizers} we require the regularizers to be proper, convex and lower semicontinuous which are common properties in convex analysis.
We call a function $r:\R^m \to \R\cup\{+\infty\}$ \emph{proper} if it is not constant $+\infty$, which means that it takes a finite value for at least a single point.
In addition, we say that $r$ is \emph{lower semicontinuous} if for all $z_0\in\R^m$
\begin{equation}
  \liminf_{z \to z_0}\, r(z) \ge r(z_0).
\end{equation}
It is easy to see that if $C\subset \R^m$ is nonempty, closed and convex, then the indicator $\delta_C$ of this set, given by
\begin{equation}
  \delta_C(z) =
  \begin{cases}
    0 & \text{if $z\in C$}\\
    +\infty & \text{otherwise}
  \end{cases}
\end{equation}
fulfills the assumptions of being proper, convex and lower semicontinuous.

\section{About the gap function}%
\label{sub:about-gap}

Typically in monotone inclusions, the distance to the set of solutions is used as a measure of quality of a given point due to the lack of more specific structure in general.
Asymptotic convergence of the iterates has been established for FBF and FBFp in~\cite[Proposition 27.13]{bc} and~\cite{yura-mat-reflection}, respectively. Furthermore, no convergence rates can be expected without stronger monotonicity assumptions.
We want to take into account the special structure of the monotone inclusion coming from the minimax problem~\eqref{eq:stoch-minimax-reg}.
For this reason we use the following (restricted) \emph{minimax gap}, common for saddle point problems, which for a point $(u,v)$ is given by
\begin{equation}
  \label{eq:minimax-gap}
  G_B(u,v) = \sup_{(x,y)\in B} \Psi(u,y) - \Psi(x,v).
\end{equation}
For the general case, i.e.\ $F$ being an arbitrary monotone and Lipschitz operator this is connected to the other measure of optimality we use in~\eqref{eq:unifying-gap}, for $w\in\R^m$ given by
\begin{equation}
  \label{eq:VI-gap}
  G_B(w) = \sup_{z\in B} \, \langle F(z),w-z \rangle + r(w) - r(z),
\end{equation}
where we interpret the possible occurrence of $\infty-\infty$ as $+\infty$. It stems from the field of Variational Inequalities where such a function is also known as \emph{merit function}~\cite{nesterov2007dual}. The relevance of the above two quantities will be made clear by the following statements.

\begin{theorem}%
  \label{thm:minimax-gap}
  Let $\Phi: \R^d\times\R^n \to \R$ be continuously differentiable and $f:\R^d\to\R\cup\{+\infty\}$, $h:\R^n\to\R\cup\{+\infty\}$ be proper, convex and lower semicontinuous and $B\subset \R^d\times\R^n$. A point $(x^*,y^*)$ in the interior of $B$ solves the saddle point problem~\eqref{eq:stoch-minimax-reg} if and only if its minimax gap~\eqref{eq:minimax-gap} is zero, $G_B(x^*,y^*)=0$.
  For all other elements of $B$ the gap is nonnegative.
\end{theorem}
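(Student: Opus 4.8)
The plan is to prove the two directions separately, using the defining saddle-point inequalities and the structure $\Psi(x,y) = f(x) + \Phi(x,y) - h(y)$. First I would establish nonnegativity of $G_B$ on all of $B$. For any $(u,v) \in B$, choosing in particular $(x,y) = (u,v)$ inside the supremum gives $\Psi(u,v) - \Psi(u,v) = 0$, so the supremum defining $G_B(u,v)$ is over a set containing $0$, hence $G_B(u,v) \geq 0$. This handles the last sentence of the statement and also shows $G_B(x^*,y^*) \geq 0$ automatically.

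For the ``only if'' direction, suppose $(x^*,y^*)$ is a saddle point of~\eqref{eq:stoch-minimax-reg}, i.e.\ $\Psi(x^*,y) \le \Psi(x^*,y^*) \le \Psi(x,y^*)$ for all $x,y$. Then for any $(x,y) \in B$ we have $\Psi(x^*,y) - \Psi(x,y^*) \le \Psi(x^*,y^*) - \Psi(x^*,y^*) = 0$, so taking the supremum over $(x,y)\in B$ yields $G_B(x^*,y^*) \le 0$. Combined with nonnegativity this forces $G_B(x^*,y^*) = 0$. Note this direction does not use that $(x^*,y^*)$ lies in the interior of $B$.

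For the ``if'' direction, suppose $(x^*,y^*)$ lies in the interior of $B$ and $G_B(x^*,y^*) = 0$. This means $\Psi(x^*,y) - \Psi(x,y^*) \le 0$ for all $(x,y) \in B$; equivalently $\Psi(x^*,y) \le \Psi(x,y^*)$ for all $(x,y)\in B$. The task is to upgrade this ``local'' saddle inequality on $B$ to the global one. Here I would split: setting $x = x^*$ gives $\Psi(x^*,y) \le \Psi(x^*,y^*)$ for all $y$ with $(x^*,y) \in B$, and setting $y = y^*$ gives $\Psi(x^*,y^*) \le \Psi(x,y^*)$ for all $x$ with $(x,y^*)\in B$. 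So $x^*$ is a local minimizer of the convex function $x \mapsto \Psi(x,y^*) = f(x) + \Phi(x,y^*) - h(y^*)$ (convex since $\Phi(\cdot,y^*)$ is convex and $f$ is convex, with the constant $-h(y^*)$ finite because $\Psi(x^*,y^*)$ is finite), and $y^*$ is a local maximizer of the concave function $y \mapsto \Psi(x^*,y)$. A local minimizer of a convex function is a global minimizer, so $\Psi(x^*,y^*) \le \Psi(x,y^*)$ for all $x \in \R^d$; symmetrically $\Psi(x^*,y) \le \Psi(x^*,y^*)$ for all $y\in\R^n$. These are exactly the saddle-point inequalities, so $(x^*,y^*)$ solves~\eqref{eq:stoch-minimax-reg}.

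The main obstacle is the ``if'' direction: the hypothesis only gives information about pairs $(x,y)$ lying in $B$, and one must be careful that the slices $\{x : (x,y^*)\in B\}$ and $\{y : (x^*,y)\in B\}$ are genuine neighborhoods of $x^*$ and $y^*$ respectively — this is precisely where the interiority assumption on $(x^*,y^*)$ is used, together with convexity of the single-variable restrictions of $\Psi$ to pass from a local to a global optimum. One should also note the finiteness of $\Psi(x^*,y^*)$ (needed so that $f(x^*)$ and $h(y^*)$ are finite and the single-variable functions are proper) follows because $G_B(x^*,y^*)=0$ is finite and $(x^*,y^*)\in B$.
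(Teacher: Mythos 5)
Your proposal is correct, and both directions match the paper's conclusions; the difference lies in how you globalize the inequality in the ``if'' direction. The paper takes an arbitrary $(x,y)$, forms the joint convex combination $(u,v)=\alpha(x^*,y^*)+(1-\alpha)(x,y)$ lying in the interior of $B$, plugs it into the supremum, and uses the convex--concave structure of $\Psi$ to peel off the factor $(1-\alpha)$ and obtain $\Psi(x^*,y)\le\Psi(x,y^*)$ for \emph{all} $(x,y)$ in one shot. You instead separate the variables first: from $G_B(x^*,y^*)=0$ you extract the two slice inequalities, observe that interiority makes each slice a genuine neighborhood, and then invoke the fact that a local minimizer (resp.\ maximizer) of a convex (resp.\ concave) function is global. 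These are really the same mechanism --- the paper's segment argument is precisely the proof of ``local implies global for convex functions'' unrolled, applied to both coordinates simultaneously --- so neither route buys a stronger result; the paper's version is slightly more compact, while yours isolates a reusable named lemma and applies it coordinate-wise. A genuine plus of your write-up is that you address the properness/finiteness of the single-variable restrictions (needed since $f$ and $h$ may take the value $+\infty$, so $\Psi$ could a priori involve $\infty-\infty$), a point the paper's proof passes over silently.
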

\begin{proof}
  A saddle point $(x^*,y^*)$ clearly fulfills that $\sup_{(x,y)\in \R^d\times\R^n} \Psi(x^*,y) - \Psi(x,y^*)=0$. On the other hand let $G_B(x^*,y^*)=0$. For an arbitrary point $(x,y)$ we can choose $\alpha\in(0,1)$ large enough such that $(u,v):= \alpha(x^*,y^*) + (1-\alpha)(x,y)$ is in the interior of $B$. Therefore,
  \begin{equation}
     \Psi(x^*,v) - \Psi(u,y^*) = \Psi(x^*,\alpha y^* + (1-\alpha)y) - \Psi(\alpha x^* +(1-\alpha)x,y^*) \le 0.
  \end{equation}
  Using the convex-concave structure of $\Psi$ we deduce that
  \begin{equation}
      \alpha\Psi(x^*,y^*) +(1-\alpha)\Psi(x^*,y) - \alpha\Psi(x^*,y^*) - (1-\alpha)\Psi(x,y^*)\le 0,
  \end{equation}
  which implies that $\Psi(x^*,y) \le \Psi(x,y^*)$. Since $(x,y)$ was chosen arbitrary $(x^*,y^*)$ is a saddle point.
\end{proof}

Similarly, an analogous statement can be shown for~\eqref{eq:VI-gap}. The proof, however is split up into multiple lemmas to highlight the connection to Variational Inequalities.

\begin{theorem}%
  \label{thm:equivalence-solution}
  Let $F:\R^m\to \R^m$ be monotone and continuous, $r:\R^m\to\R\cup\{+\infty\}$ proper, convex and lower semicontinuous and $B\subset\R^m$. A point $w^*$ in the interior of $B$ solves the monotone inclusion
  \begin{equation}
    \label{eq:mon-incl-ap}
    0 \in F(w) + \partial r(w)
  \end{equation}
  if and only if its restricted gap~\eqref{eq:VI-gap} is zero, $G_B(w^*)=0$. For all other elements of $B$ the gap is nonnegative.
\end{theorem}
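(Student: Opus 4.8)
The plan is to prove the two implications via the standard variational-inequality characterization of solutions to monotone inclusions, splitting the argument into a "first-order" step and a "maximal monotonicity / convexity" step, mirroring the structure of Theorem~\ref{thm:minimax-gap}. First I would establish the easy direction: if $w^*$ solves~\eqref{eq:mon-incl-ap}, then there exists $\zeta \in \partial r(w^*)$ with $F(w^*) = -\zeta$, so for every $z \in \R^m$ the subgradient inequality $\langle \zeta, z - w^*\rangle + r(w^*) \le r(z)$ gives $\langle F(w^*), w^* - z\rangle + r(w^*) - r(z) \le 0$. By monotonicity of $F$, $\langle F(z), w^* - z\rangle \le \langle F(w^*), w^* - z\rangle$, hence $\langle F(z), w^* - z\rangle + r(w^*) - r(z) \le 0$ for all $z$, and in particular taking the supremum over $z \in B$ yields $G_B(w^*) \le 0$. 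Nonnegativity of $G_B$ on $B$ is immediate by choosing $z = w$ in the supremum (the summand vanishes), so $G_B(w^*) = 0$, and the same choice proves the final sentence of the theorem for arbitrary $w \in B$.

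For the converse, suppose $w^*$ lies in the interior of $B$ and $G_B(w^*) = 0$, i.e.\ $\langle F(z), w^* - z\rangle + r(w^*) - r(z) \le 0$ for all $z \in B$. The first task is to upgrade this from $z \in B$ to all $z \in \R^m$: given arbitrary $z$, pick $\lambda \in (0,1)$ close enough to $1$ that $z_\lambda := \lambda w^* + (1-\lambda) z \in B$ (possible since $w^*$ is interior), plug $z_\lambda$ into the inequality, use convexity of $r$ to bound $r(z_\lambda) \le \lambda r(w^*) + (1-\lambda) r(z)$, and use $w^* - z_\lambda = (1-\lambda)(w^* - z)$; dividing by $(1-\lambda) > 0$ gives $\langle F(z_\lambda), w^* - z\rangle + r(w^*) - r(z) \le 0$. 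Now let $\lambda \to 1^-$: by continuity of $F$, $F(z_\lambda) \to F(w^*)$, so $\langle F(w^*), w^* - z\rangle + r(w^*) - r(z) \le 0$, that is, $\langle -F(w^*), z - w^*\rangle + r(w^*) \le r(z)$ for every $z \in \R^m$. This is precisely the statement that $-F(w^*) \in \partial r(w^*)$, i.e.\ $0 \in F(w^*) + \partial r(w^*)$, so $w^*$ solves~\eqref{eq:mon-incl-ap}.

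The only subtle point — and the one I would be most careful about — is the passage from the "weak" VI inequality $\langle F(z), w^* - z\rangle \le r(z) - r(w^*)$ (Minty/dual form, involving $F(z)$) to the "strong" form $\langle F(w^*), w^* - z\rangle \le r(z) - r(w^*)$ (Stampacchia/primal form, involving $F(w^*)$), which is what is actually needed to extract a subgradient. The localization-plus-limit trick above accomplishes exactly this, and crucially it uses only continuity of $F$, not Lipschitz continuity; monotonicity was only needed for the forward direction. If the excerpt's subsequent lemmas instead isolate these two forms as separate named results (a "weak-to-strong" lemma and a "strong VI $\iff$ inclusion" lemma), I would simply invoke them in sequence; either way the mathematical content is the convexity-driven radial limit argument, which is routine once set up correctly. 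I would end by noting $G_B \ge 0$ on all of $B$, completing the proof.
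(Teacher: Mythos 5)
Your proposal is correct and follows essentially the same route as the paper, which merely packages the identical ingredients into four separate lemmas: the subdifferential characterization (inclusion $\Leftrightarrow$ strong VI), monotonicity for the strong-to-weak direction, nonnegativity via $z=w$, and the interiority-plus-radial-limit argument using convexity of $r$ and continuity of $F$ for the converse. No substantive difference.
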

Let the assumptions of Theorem~\ref{thm:equivalence-solution} hold true for the following lemmas as we break up the proof into separate statements. We do so by making use of the associated \emph{Variational inequality (VI)}
  \begin{equation}
    \label{eq:VI-strong}
    \textup{find $w$ such that}\quad \langle F(w), z - w \rangle +r(z) - r(w) \ge 0 \quad \forall z \in \R^m.
  \end{equation}
\begin{lemma}%
  \label{lem:gap-1}
  The monotone inclusion~\eqref{eq:mon-incl-ap} is equivalent to the VI~\eqref{eq:VI-strong}.
\end{lemma}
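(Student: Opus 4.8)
The plan is to prove the equivalence by unfolding the definition of the subdifferential. First I would observe that $w$ solves the monotone inclusion~\eqref{eq:mon-incl-ap} precisely when $-F(w) \in \partial r(w)$, since $F$ is single-valued. By the very definition of the subdifferential given in the excerpt, the membership $-F(w) \in \partial r(w)$ means
\begin{equation}
  \langle -F(w), z - w \rangle + r(w) \le r(z) \quad \forall z \in \R^m.
\end{equation}
Rearranging this inequality to $\langle F(w), z - w \rangle + r(z) - r(w) \ge 0$ for all $z$ is exactly the statement that $w$ solves the VI~\eqref{eq:VI-strong}. So the two conditions are literally the same after a trivial rearrangement.

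Concretely, I would structure the proof as a short chain of equivalences: $w$ solves~\eqref{eq:mon-incl-ap} $\iff$ $-F(w) \in \partial r(w)$ $\iff$ the defining inequality of $\partial r$ at $w$ with subgradient $-F(w)$ holds for all $z$ $\iff$ $w$ solves~\eqref{eq:VI-strong}. Each step is an equivalence with no hidden hypotheses needed beyond single-valuedness of $F$ (so that "$0 \in F(w)+\partial r(w)$" is the same as "$-F(w)\in\partial r(w)$"); monotonicity, continuity, and properness/convexity/lower semicontinuity of $r$ are not even required for this particular lemma, though they are part of the standing assumptions.

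There is essentially no obstacle here — the lemma is a definitional bookkeeping step whose only role is to set up the subsequent lemmas that relate the strong VI~\eqref{eq:VI-strong} to the weak (Minty) VI and then to the restricted gap function~\eqref{eq:VI-gap}. The one point worth stating carefully is that we are using $\partial r$ in the sense defined in the excerpt (the set of $v$ with $\langle v, w'-w\rangle + r(w) \le r(w')$ for all $w'$), so the rewriting is immediate; if one instead started from a characterization via directional derivatives one would need convexity of $r$ to pass between them, but that subtlety does not arise with the definition as given.
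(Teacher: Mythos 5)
Your proof is correct and takes exactly the same route as the paper, which likewise dismisses the lemma as an immediate consequence of the definition of the subdifferential; you merely spell out the rearrangement that the paper leaves implicit.
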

\begin{proof}
  The equivalence of~\eqref{eq:mon-incl-ap} and~\eqref{eq:VI-strong} follows immediately from the definition of the subdifferential of $r$.
\end{proof}
The formulation~\eqref{eq:VI-strong} is typically referred to as the \emph{strong} form of the VI, whereas
\begin{equation}
  \label{eq:VI-weak}
  \text{find $w$ such that}\quad\langle F(z), z - w \rangle +r(z) - r(w) \ge 0 \quad \forall z \in \R^m,
\end{equation}
is known as the \emph{weak} formulation.
\begin{lemma}%
  \label{lem:gap-2}
  Under the given assumptions the notion of weak and strong VI are equivalent.
\end{lemma}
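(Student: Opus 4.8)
The plan is to prove the two implications separately, noting that one direction is trivial and uses only monotonicity, while the converse is the classical \emph{Minty trick} and requires continuity of $F$ together with convexity of $\dom r$ (which is automatic since $r$ is proper convex lsc on $\R^m$). First I would show that a solution of the strong VI~\eqref{eq:VI-strong} solves the weak VI~\eqref{eq:VI-weak}: if $w$ satisfies $\langle F(w), z-w\rangle + r(z)-r(w)\ge 0$ for all $z$, then for any $z$ we add the monotonicity inequality $\langle F(z)-F(w), z-w\rangle\ge 0$ to obtain $\langle F(z), z-w\rangle + r(z)-r(w)\ge 0$, which is exactly~\eqref{eq:VI-weak}. (If $r(z)=+\infty$ both inequalities hold trivially, so we may restrict to $z\in\dom r$.)

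For the converse, suppose $w$ solves the weak VI~\eqref{eq:VI-weak}; I want to recover the strong VI. Fix an arbitrary $z\in\dom r$ and, for $t\in(0,1]$, set $z_t := w + t(z-w) = (1-t)w + tz$. Since $\dom r$ is convex and $w,z\in\dom r$, we have $z_t\in\dom r$. Applying the weak VI at the test point $z_t$ gives
\begin{equation}
  \langle F(z_t),\, z_t - w\rangle + r(z_t) - r(w) \ge 0.
\end{equation}
Now $z_t - w = t(z-w)$, and by convexity of $r$ we have $r(z_t) \le (1-t)r(w) + t\, r(z)$, hence $r(z_t) - r(w) \le t\big(r(z)-r(w)\big)$. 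Substituting and dividing by $t>0$ yields
\begin{equation}
  \langle F(z_t),\, z - w\rangle + r(z) - r(w) \ge 0.
\end{equation}
Letting $t\downarrow 0$, continuity of $F$ gives $F(z_t)\to F(w)$, so we obtain $\langle F(w), z-w\rangle + r(z)-r(w)\ge 0$. Since $z\in\dom r$ was arbitrary (and the inequality is trivial for $z\notin\dom r$), $w$ solves the strong VI~\eqref{eq:VI-strong}.

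The only genuinely delicate point is the limit argument in the converse direction: one must take the test point along the segment from $w$ toward $z$ (so that feasibility with respect to $\dom r$ is preserved) rather than perturbing arbitrarily, and one needs the convexity estimate on $r$ to handle the nonsmooth term uniformly in $t$ before passing to the limit. Continuity of $F$ is used precisely here and nowhere else; monotonicity is used only for the easy direction. I do not expect any real obstacle beyond bookkeeping the $\dom r$ membership.
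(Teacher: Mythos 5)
Your proposal is correct and is essentially the paper's own argument: the easy direction via monotonicity, and the converse via the Minty trick of testing the weak VI at the convex combination $z_t=(1-t)w+tz$ (the paper writes this as $\alpha w^*+(1-\alpha)u$ with $\alpha\to 1$, i.e.\ $t=1-\alpha$), using convexity of $r$ before dividing by the parameter and invoking continuity of $F$ in the limit. Your extra bookkeeping of $\dom r$ membership is a harmless refinement the paper leaves implicit.
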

\begin{proof}
  For the monotone operator $F$ it is clear that if $w^*$ is a solution to the strong formulation~\eqref{eq:VI-strong}, it is also a solution to the weak formulation~\eqref{eq:VI-weak}.
  In fact, if $F$ is continuous the reverse implication also holds true. To see this, let $w^*$ be a solution to the weak VI~\eqref{eq:VI-weak} and $z=\alpha w^* +(1-\alpha)u$ for an arbitrary $u \in \R^m$ and $\alpha\in(0,1)$, then
  \begin{equation}
    \langle F(\alpha w^* +(1-\alpha)u), (1-\alpha)(u-w^*) \rangle + r(\alpha w^* +(1-\alpha)u) - r(w^*) \ge0.
  \end{equation}
  This implies by the convexity of $r$ that
  \begin{equation}
    (1-\alpha)\langle F(\alpha w^* +(1-\alpha)u), (u-w^*) \rangle + (1-\alpha)(r(u) - r(w^*)) \ge0.
  \end{equation}
  By dividing by $(1-\alpha)$ and then taking the limit $\alpha\to1$ we obtain that $w^*$ is a solution of the strong form~\eqref{eq:VI-strong}.
\end{proof}
With the notion of VIs in mind, the above defined gap~\eqref{eq:VI-gap} becomes natural as it measures how much the statement of~\eqref{eq:VI-weak} is violated.
\begin{lemma}%
  \label{lem:gap-3}
  $G_B$ is nonnegative on $B$ and zero for solutions of the weak VI.\
\end{lemma}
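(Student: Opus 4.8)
The proof splits into the two assertions of the statement, both of which are short direct computations from the definition $G_B(w) = \sup_{z\in B} \langle F(z), w-z\rangle + r(w) - r(z)$.

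First I would prove nonnegativity on $B$. Fix $w \in B$. Since $B$ is contained in the domain over which the supremum is taken, the particular choice $z = w$ is admissible, and it contributes the value $\langle F(w), w - w\rangle + r(w) - r(w) = 0$ to the supremum (note $r(w)$ is finite here, since otherwise $w$ would not be a relevant point; more carefully one restricts attention to $w \in B \cap \dom r$, which is what matters). Hence $G_B(w) \ge 0$.

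Next I would show $G_B(w^*) = 0$ for a solution $w^*$ of the weak VI~\eqref{eq:VI-weak} lying in $B$. By definition of the weak VI, for every $z \in \R^m$ one has $\langle F(z), z - w^*\rangle + r(z) - r(w^*) \ge 0$; in particular $r(w^*) < +\infty$, so the expressions below are well defined and no $\infty - \infty$ ambiguity arises. Rearranging gives $\langle F(z), w^* - z\rangle + r(w^*) - r(z) \le 0$ for all $z$, hence in particular for all $z \in B$. Taking the supremum over $z \in B$ yields $G_B(w^*) \le 0$. Combining this with the nonnegativity established above (applicable since $w^* \in B$) gives $G_B(w^*) = 0$.

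I do not expect any genuine obstacle here: both parts are one-line consequences of the definition, the only points requiring a word of care being the finiteness of $r(w^*)$ for a VI solution (which is automatic) and the convention that $\infty - \infty$ is read as $+\infty$, which is harmless because it can only make $G_B$ larger and the relevant inequalities run in the direction where it does not interfere.
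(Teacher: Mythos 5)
Your proposal is correct and follows essentially the same route as the paper's own proof: nonnegativity by choosing $z=w$ in the supremum, and $G_B(w^*)=0$ by rearranging the weak VI inequality and taking the supremum over $B$. The only difference is your added care about the finiteness of $r(w^*)$ and the $\infty-\infty$ convention, which the paper leaves implicit but which does no harm.
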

\begin{proof}
  It is clear that $G_B(w)\ge0$ for $w\in B$ as $z=w$ can be chosen in the supremum.
  On the other hand if $w^*\in B$ is a solution to the weak VI~\eqref{eq:VI-weak} then $G_B(w^*)=0$.
  This follows from the fact that for a solution of~\eqref{eq:VI-weak} for all $z\in B$
  \begin{equation}
    \langle F(z), w^*-z \rangle +r(w^*) -r(z) \le 0.
  \end{equation}
  Therefore the supremum over the above expression in $z$ is also less than zero, but clearly zero is obtained for $z=w^*$.
\end{proof}
For the reverse implication to hold true, we may not use points on the boundary of $B$.
\begin{lemma}%
  \label{lem:gap-4}
  If a point $w^*$ in the interior of $B$ exhibits zero gap $G_B(w^*)=0$, then it is a solution to the weak VI~\eqref{eq:VI-weak}.
\end{lemma}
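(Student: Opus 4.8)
The plan is to argue by contraposition, or equivalently directly, showing that the vanishing of the gap at an interior point $w^*$ forces the defining inequality of the weak VI~\eqref{eq:VI-weak} to hold for every $z\in\R^m$. First I would record what $G_B(w^*)=0$ gives us: since $w^*\in B$, taking the supremum over $z\in B$ yields
\begin{equation}
  \langle F(z), w^*-z\rangle + r(w^*) - r(z) \le 0 \quad \forall z\in B,
\end{equation}
that is, $\langle F(z), z-w^*\rangle + r(z) - r(w^*) \ge 0$ for all $z\in B$. This is precisely the weak VI inequality~\eqref{eq:VI-weak}, but only tested against points $z$ inside the auxiliary set $B$; the task is to upgrade it to all $z\in\R^m$.

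The key step exploits that $w^*$ lies in the \emph{interior} of $B$, so a short segment from $w^*$ toward any target point stays in $B$. Fix an arbitrary $u\in\R^m$. For $\alpha\in(0,1)$ sufficiently close to $1$, the point $z_\alpha := \alpha w^* + (1-\alpha)u$ belongs to $B$ (if $r(u)=+\infty$ the inequality~\eqref{eq:VI-weak} is trivial, so we may assume $u\in\dom r$). Applying the displayed inequality with $z=z_\alpha$ and using $z_\alpha - w^* = (1-\alpha)(u-w^*)$ together with convexity of $r$, namely $r(z_\alpha)\le \alpha r(w^*) + (1-\alpha)r(u)$, gives
\begin{equation}
  (1-\alpha)\langle F(z_\alpha), u - w^*\rangle + (1-\alpha)\big(r(u) - r(w^*)\big) \ge 0.
\end{equation}
Dividing by $1-\alpha>0$ and letting $\alpha\to 1$, continuity of $F$ yields $F(z_\alpha)\to F(w^*)$, hence
\begin{equation}
  \langle F(w^*), u - w^*\rangle + r(u) - r(w^*) \ge 0.
\end{equation}
Since $u\in\R^m$ was arbitrary, $w^*$ solves the strong VI~\eqref{eq:VI-strong}, and a fortiori the weak VI~\eqref{eq:VI-weak} by Lemma~\ref{lem:gap-2}. (Alternatively, one can note that the passage above already reproduces the argument of Lemma~\ref{lem:gap-2} and conclude directly.)

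This mirrors the convexity-and-limit trick used in the proofs of Lemma~\ref{lem:gap-2} and Theorem~\ref{thm:minimax-gap}, so there is no genuinely hard step; the only subtlety to handle carefully is the interior assumption — it is exactly what guarantees $z_\alpha\in B$ for $\alpha$ near $1$, and the lemma is false without it, since a boundary point can have zero gap merely because $B$ does not "see" the directions in which the VI inequality is violated. I would also make the harmless reduction to $u\in\dom r$ explicit, since otherwise $r(z_\alpha)$ and $r(u)$ need not be finite and the convexity estimate is vacuous rather than useful.
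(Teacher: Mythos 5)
Your proof is correct and follows essentially the same route as the paper's: exploit the interior assumption to place $z_\alpha=\alpha w^*+(1-\alpha)u$ in $B$, use convexity of $r$, divide by $1-\alpha$, and let $\alpha\to1$ with continuity of $F$ to obtain the strong (hence weak) VI. Your explicit reduction to $u\in\dom r$ is a harmless refinement the paper leaves implicit.
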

\begin{proof}
  Since $w^*$ is in the interior of $B$ we can, for an arbitrary $w\in\R^m$, choose $\alpha\in(0,1)$ large enough such that $z:=\alpha w^* + (1-\alpha)w\in B$. Using this $z$ in the supremum of the gap we deduce that
  \begin{equation}
    \langle F(\alpha w^* + (1-\alpha)w), w^*-\alpha w^* - (1-\alpha)w\rangle +r(w^*) -r(\alpha w^* + (1-\alpha)w) \le 0.
  \end{equation}
  This implies that
  \begin{equation}
    (1-\alpha)\langle F(\alpha w^* + (1-\alpha)w), w-w^* \rangle + (1-\alpha)(r(w)-r(w^*)) \ge 0.
  \end{equation}
  By dividing by $(1-\alpha)$ and then taking the limit $\alpha\to1$ we deduce that $w^*$ solves the strong form of the VI~\eqref{eq:VI-strong}.
\end{proof}
Now, we can turn to proving the theorem.
\begin{proof}[Proof of Theorem~\ref{thm:equivalence-solution}]
  Combine Lemma~\ref{lem:gap-1},~\ref{lem:gap-2},~\ref{lem:gap-3} and~\ref{lem:gap-4}.
\end{proof}

\section{Refined theorems}%

Recall that restricted (unifying) gap function $G_B$ defined in~\eqref{eq:unifying-gap} is computed with respect to a set $B\subset \R^m$ where $D:= \sup_{w,z\in B} \Vert z-w \Vert$ denotes its diameter and it is assumed that $z_0\in B$.
Furthermore, the averaged iterates $\bar{w}_K$ for $K\ge1$ are given by
\begin{equation}
  \bar{w}_K := \frac{\sum_{k=0}^{K-1} \alpha_k w_k}{\sum_{k=0}^{K-1} \alpha_k}.
\end{equation}

\subsection{Deterministic statements}%
The convergence statement of Theorem~\ref{thm:deterministic} actually holds true not just for a constant step size as presented in Section~\ref{sec:main}, but for variable step sizes as well.
\begin{theorem}%
  \label{thm:refined-deterministic}
  Let ${(w_k)}_{k\ge0}$ be the sequence generated by Algorithm~\ref{alg:generalized-det-fbf}. If
  \begin{enumerate}[(i)]
    \item FBF, i.e.\ $\diamondsuit_k=z_k$, with step size $0 < \alpha_k \le \alpha \le \nicefrac{1}{L}$, or
    \item FBFp, i.e.\ $\diamondsuit_k=w_{k-1}$, with step size $0 < \alpha_k \le \alpha \le \nicefrac{1}{2L}$
  \end{enumerate}
  is chosen, then for all $K\ge 1$
  \begin{equation}
    G_{B}(\bar{w}_K) \le \frac{D^2}{2\sum_{k=0}^{K-1} \alpha_k}.
  \end{equation}
\end{theorem}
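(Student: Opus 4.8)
The plan is to run a single, unified energy (Lyapunov) argument that covers both FBF ($\diamondsuit_k = z_k$) and FBFp ($\diamondsuit_k = w_{k-1}$) at once, exploiting the fact that the second line of Algorithm~\ref{alg:generalized-det-fbf} reads $z_{k+1} = w_k + \alpha_k(F(\diamondsuit_k) - F(w_k))$ in both cases. The starting point is the proximal/projection characterization of $w_k$: since $w_k = \prox{\alpha_k r}{z_k - \alpha_k F(\diamondsuit_k)}$, the optimality condition gives, for every $z \in \R^m$,
\begin{equation}
  \langle z_k - w_k - \alpha_k F(\diamondsuit_k),\, z - w_k \rangle \le \alpha_k\big(r(w_k) - r(z)\big).
\end{equation}
Rearranging and using the identity $\langle z_k - w_k, z - w_k\rangle = \tfrac12\|z_k - z\|^2 - \tfrac12\|z_{k+1}-z\|^2 + (\text{cross terms involving } z_{k+1}-w_k)$ — here one substitutes $z_{k+1} - w_k = \alpha_k(F(\diamondsuit_k) - F(w_k))$ — I would produce a per-iteration inequality of the shape
\begin{equation}
  \alpha_k\big(\langle F(z), w_k - z\rangle + r(w_k) - r(z)\big)
  \le \tfrac12\|z_k - z\|^2 - \tfrac12\|z_{k+1} - z\|^2 + R_k,
\end{equation}
where the residual $R_k$ collects the terms $\alpha_k\langle F(w_k) - F(z), w_k - z\rangle$ (nonpositive by monotonicity of $F$, hence discarded), the term $-\tfrac12\|z_{k+1}-w_k\|^2$, and an error term coming from replacing $F(\diamondsuit_k)$ by $F(w_k)$ or $F(z_k)$. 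Monotonicity of $F$ is what lets me pass from $F(w_k)$ to $F(z)$ inside the gap expression; note that in the saddle-point case, by the convex-concave structure, the bound $\langle F(z), w_k - z\rangle + r(w_k) - r(z) \ge \Psi(u_k, y) - \Psi(x, v_k)$ for $z = (x,y)$ lets the same argument control the minimax gap $G_B$ — this is exactly the standard step underlying Theorem~\ref{thm:minimax-gap}.

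The heart of the matter is bounding the error term. In the FBF case ($\diamondsuit_k = z_k$) it is $\alpha_k\langle F(z_k) - F(w_k),\, z_{k+1} - w_k\rangle = \|z_{k+1} - w_k\|^2$ (using the algorithm's own definition $z_{k+1} - w_k = \alpha_k(F(z_k) - F(w_k))$), which combined with the $-\tfrac12\|z_{k+1}-w_k\|^2$ already present and the $L$-Lipschitz bound $\|F(z_k) - F(w_k)\| \le L\|z_k - w_k\|$ gives a net contribution $\le (\alpha_k^2 L^2 - \tfrac12)\|z_k - w_k\|^2$ after also using $\|z_k - w_k\|^2 \ge \ldots$; wait — more carefully, one gets a term $\tfrac12\|z_{k+1}-w_k\|^2 - \tfrac12\|z_k - w_k\|^2 + \tfrac12\|z_k-w_k\|^2$ type telescoping, and the condition $\alpha_k \le 1/L$ forces the surviving quadratic terms to be nonpositive. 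For FBFp ($\diamondsuit_k = w_{k-1}$), the error term is $\alpha_k\langle F(w_{k-1}) - F(w_k),\, z_{k+1} - w_k\rangle$ but now $z_{k+1} - w_k = \alpha_k(F(w_{k-1}) - F(w_k))$ only after re-indexing; the cleaner route is to keep a cross term $\alpha_k\langle F(w_{k-1}) - F(w_k), w_k - z\rangle$ and split it as a difference of consecutive inner products $\big(\alpha_k\langle F(w_{k-1}) - F(w_k), w_{k-1} - z\rangle\big)$-type pieces plus $\alpha_k\langle F(w_{k-1}) - F(w_k), w_k - w_{k-1}\rangle$, so that summing over $k$ telescopes the first piece and leaves $\sum_k L\alpha_k \|w_k - w_{k-1}\|^2$-type remainders to be absorbed by the $-\tfrac12\|z_{k+1}-w_k\|^2$ and $-\tfrac12\|z_k - w_k\|^2$ terms — this is where the stricter step-size bound $\alpha_k \le 1/(2L)$ is needed, since each difference of iterates now contributes twice. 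I expect \textbf{this telescoping-plus-absorption bookkeeping in the FBFp case to be the main obstacle}: one must choose exactly the right way to split the mismatch term $F(w_{k-1}) - F(w_k)$ so that, after summing, all negative quadratic terms dominate all positive ones under $\alpha_k \le 1/(2L)$, and one must handle the initial index ($w_{-1} = z_0$) so the telescoped boundary term vanishes or is nonpositive.

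Once the per-iteration inequality is summed from $k = 0$ to $K-1$, the right-hand side telescopes to $\tfrac12\|z_0 - z\|^2 - \tfrac12\|z_K - z\|^2 + (\text{nonpositive residuals}) \le \tfrac12\|z_0 - z\|^2 \le \tfrac12 D^2$ for $z \in B$ (using $z_0 \in B$ and the definition of $D$), while the left-hand side is $\sum_{k=0}^{K-1} \alpha_k\big(\langle F(z), w_k - z\rangle + r(w_k) - r(z)\big)$. Dividing by $\sum_{k=0}^{K-1}\alpha_k$, using convexity of $w \mapsto \langle F(z), w - z\rangle + r(w) - r(z)$ (it is affine plus convex) together with Jensen to pull the average inside, and then taking the supremum over $z \in B$ yields
\begin{equation}
  G_B(\bar w_K) \le \frac{D^2}{2\sum_{k=0}^{K-1}\alpha_k},
\end{equation}
which is precisely the claim; the constant-step-size Theorem~\ref{thm:deterministic} follows by setting $\alpha_k = \alpha$.
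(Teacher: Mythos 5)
Your overall architecture --- prox optimality condition, three-point identity, telescoping of $\tfrac12\Vert z_k - z\Vert^2$, Jensen on the affine-plus-convex map $w \mapsto \langle F(z), w-z\rangle + r(w) - r(z)$ (resp.\ convex-concavity of $\Psi$ for the minimax gap), then supremum over $z\in B$ --- is exactly the paper's, and your FBF branch is essentially its proof: the paper's Proposition~\ref{prop:general} with $\sigma=0$, $\gamma\to 0$ yields $\alpha_k g(w_k,z) + \tfrac12\Vert z_{k+1}-z\Vert^2 \le \tfrac12\Vert z_k-z\Vert^2 - \tfrac12(1-\alpha_k^2L^2)\Vert z_k-w_k\Vert^2$, and $\alpha_k\le \nicefrac1L$ kills the last term. (A minor ordering difference: the paper keeps $g(w_k,z)=\langle F(w_k),w_k-z\rangle + r(w_k)-r(z)$ throughout and only invokes monotonicity/convex-concavity at the very end, in Lemma~\ref{lem:small-g-gap}; you invoke it per iteration. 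This is immaterial.)

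For FBFp, however, you propose a genuinely different mechanism --- telescoping the cross inner product $\alpha_k\langle F(w_{k-1})-F(w_k), w_k - z\rangle$ \`a la Malitsky--Tam --- and then stop precisely at the step you yourself flag as the main obstacle, without verifying that the absorption closes under $\alpha_k\le \nicefrac{1}{2L}$ or that the boundary term at $k=0$ is benign. That route can be made to work, but as written it is a plan, not a proof. The paper avoids inner-product telescoping entirely and stays with squared norms: the only new term relative to FBF is $\tfrac12\alpha_k^2L^2\Vert w_{k-1}-w_k\Vert^2$, and it is absorbed via
\begin{equation}
  \Vert z_k - w_k\Vert^2 \ \ge\ \tfrac12\Vert w_{k-1}-w_k\Vert^2 - \Vert z_k - w_{k-1}\Vert^2 \ \ge\ \tfrac12\Vert w_{k-1}-w_k\Vert^2 - \alpha_{k-1}^2L^2\Vert w_{k-1}-w_{k-2}\Vert^2,
\end{equation}
where the second inequality uses the algorithm's own update $z_k - w_{k-1} = \alpha_{k-1}\bigl(F(w_{k-2})-F(w_{k-1})\bigr)$ together with Lipschitz continuity. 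This turns the per-iteration inequality into a telescoping recursion in the quantity $\alpha_k^2L^2\Vert w_k - w_{k-1}\Vert^2$, which closes exactly when $\tfrac12 - \alpha_k^2L^2 \ge \alpha_k^2L^2$, i.e.\ $\alpha_k\le\nicefrac{1}{2L}$; the index $k=0$ is handled by $w_{-1}=z_0$, which gives $\Vert z_0-w_0\Vert^2 = \Vert w_{-1}-w_0\Vert^2$ and makes the boundary term droppable. If you want to keep your inner-product route, you must still supply this same quantitative bookkeeping; adopting the paper's norm-based absorption is the shorter way to finish.
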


\subsection{Stochastic statements}%
 We actually prove a slightly more general version of Theorem~\ref{thm:stoch}.
In particular the step size can be chosen larger than initially claimed, however, at the cost of a worse constant.
\begin{theorem}%
  \label{thm:refined-stoch-fbf}
  Let Assumption~\ref{ass:unbiased},~\ref{ass:bounded-var} and~\ref{ass:independent} hold and let ${(w_k)}_{k\ge0}$ be the sequence generated by FBF, i.e.\ Algorithm~\ref{alg:generalized-stoch-fbf} with $\diamondsuit_k=z_k$ and $\triangle_k=\eta_k$. Let the step size $\alpha_k \le \alpha < \frac{1}{L}$, then
  \begin{equation}
    \Exp{G_B(\bar{w}_K)} \le \frac{D^2 + 4{(1-\alpha^2L^2)}^{-1}\sigma^2\sum_{k=0}^{K-1}\alpha^2_k}{2\sum_{k=0}^{K-1}\alpha_k},
  \end{equation}
  for all $K\ge1$.
\end{theorem}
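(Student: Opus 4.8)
The plan is to run a single unifying descent-type estimate that simultaneously covers both FBF ($\diamondsuit_k = z_k$) and FBFp ($\diamondsuit_k = w_{k-1}$) in the stochastic setting, following the template that must already underlie Theorem~\ref{thm:deterministic} and its refinement. The starting point is the defining inequality of the proximal step: $w_k = \prox{\alpha_k r}{z_k - \alpha_k F(\diamondsuit_k;\triangle_k)}$ means precisely that $z_k - \alpha_k F(\diamondsuit_k;\triangle_k) - w_k \in \alpha_k \partial r(w_k)$, so for every $z \in \R^m$,
\begin{equation}
  \langle z_k - w_k - \alpha_k F(\diamondsuit_k;\triangle_k), z - w_k \rangle \le \alpha_k\big(r(z) - r(w_k)\big).
\end{equation}
Rearranging with the three-point identity $\langle z_k - w_k, z - w_k\rangle = \tfrac12(\|z_k - z\|^2 - \|z_{k+1} - z\|^2 + \|z_k - w_k\|^2 - \|z_{k+1} - w_k\|^2) + \langle z_{k+1} - w_k, z - w_k\rangle$ and substituting $z_{k+1} - w_k = \alpha_k(F(\diamondsuit_k;\triangle_k) - F(w_k;\xi_k))$ produces the key telescoping bound
\begin{equation}
  \alpha_k\big(\langle F(w_k;\xi_k), w_k - z\rangle + r(w_k) - r(z)\big) \le \tfrac12\|z_k - z\|^2 - \tfrac12\|z_{k+1} - z\|^2 + \tfrac12\|z_k - w_k\|^2 - \tfrac12\|z_{k+1} - w_k\|^2 + \alpha_k\langle F(\diamondsuit_k;\triangle_k) - F(w_k;\xi_k), z_k - w_k\rangle.
\end{equation}
The cross term is split via Young's inequality to absorb $\tfrac12\|z_k - w_k\|^2$ and the Lipschitz/step-size condition $\alpha L < 1$; what remains is a residual of the form $\alpha_k\langle F(\diamondsuit_k;\triangle_k) - F(w_k;\xi_k), z_k - w_k\rangle$ controlled by $\|F(\diamondsuit_k;\triangle_k) - F(w_k;\xi_k)\|^2$, which in the FBF case ($\diamondsuit_k = z_k$, $\triangle_k = \eta_k$ fresh) decomposes into a true-gradient Lipschitz part $\le L^2\|z_k - w_k\|^2$ and stochastic noise contributions.

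Next I would pass to the true operator $F$ and take expectations. Writing $F(w_k;\xi_k) = F(w_k) + (F(w_k;\xi_k) - F(w_k))$, the inner product $\langle F(w_k;\xi_k), w_k - z\rangle$ becomes $\langle F(w_k), w_k - z\rangle$ plus a noise term; since $\xi_k$ is independent of $w_k$ (Assumption~\ref{ass:independent}) and unbiased (Assumption~\ref{ass:unbiased}), $\Exp{\langle F(w_k;\xi_k) - F(w_k), w_k - z\rangle} = 0$ when $z$ is the (deterministic, or $\xi_k$-independent) comparator — here one fixes $z$ and only at the very end takes the supremum over $z \in B$, which is the standard device to keep the martingale structure intact. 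The noise-squared terms are bounded by $\sigma^2$ via Assumption~\ref{ass:bounded-var}; carefully accounting for the fact that FBF uses two independent samples $\eta_k$ and $\xi_k$ gives a residual of the claimed form $4(1-\alpha^2 L^2)^{-1}\sigma^2 \alpha_k^2$ after the Young-inequality constant is optimized against $1 - \alpha^2 L^2$. Summing over $k = 0, \dots, K-1$ telescopes the $\|z_k - z\|^2$ terms (using $z_0 \in B$, so $\|z_0 - z\| \le D$) and discards $-\tfrac12\|z_K - z\|^2 \le 0$, yielding
\begin{equation}
  \sum_{k=0}^{K-1}\alpha_k\big(\langle F(w_k), w_k - z\rangle + r(w_k) - r(z)\big) \le \tfrac12 D^2 + 2(1-\alpha^2L^2)^{-1}\sigma^2\sum_{k=0}^{K-1}\alpha_k^2
\end{equation}
in expectation, for each fixed $z$.

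The final step is to convert this into the gap bound at the averaged iterate $\bar{w}_K = (\sum \alpha_k w_k)/(\sum \alpha_k)$. By monotonicity of $F$, $\langle F(w_k), w_k - z\rangle \ge \langle F(z), w_k - z\rangle$, so the left side is at least $\sum \alpha_k(\langle F(z), w_k - z\rangle + r(w_k) - r(z))$; convexity of $r$ and linearity in $w_k$ then give $\ge (\sum\alpha_k)(\langle F(z), \bar w_K - z\rangle + r(\bar w_K) - r(z))$. Dividing by $\sum \alpha_k$, taking the supremum over $z \in B$ (which, crucially, may be exchanged with the expectation only after this point — one uses that $\Exp{\sup_z(\cdots)} \ge \sup_z \Exp{\cdots}$ is the wrong direction, so instead the standard argument is that the bound holds uniformly and one takes sup inside by a measurability/compactness argument, or — cleaner — one notes the comparator $z$ enters only through $\|z_0 - z\|^2 \le D^2$ and the zero-mean term, both of which are handled uniformly), recovers $\Exp{G_B(\bar w_K)} \le (D^2 + 4(1-\alpha^2L^2)^{-1}\sigma^2\sum\alpha_k^2)/(2\sum\alpha_k)$ for the VI gap; the minimax-gap case follows by the same inequalities specialized to $F$ of the form~\eqref{eq:F-from-saddle} together with convex-concavity of $\Psi$, exactly as in Theorem~\ref{thm:minimax-gap}.

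\textbf{The main obstacle} I anticipate is the handling of the stochastic cross term and the exchange of supremum and expectation. Naively bounding $\Exp{\sup_{z\in B}\sum_k \alpha_k \langle F(w_k;\xi_k) - F(w_k), z\rangle}$ does not vanish and contributes an extra $O(D\sigma\sqrt{\sum\alpha_k^2})$-type term, which would still give the right rate but a different constant than the stated $18\sigma^2$ (or $4(1-\alpha^2L^2)^{-1}\sigma^2$); getting the clean constant requires either restructuring so that the comparator-dependent noise term is a genuine martingale difference summed and controlled by a separate recursion, or absorbing it differently. The careful bookkeeping of which samples ($\eta_k$ vs.\ $\xi_k$ vs.\ $\xi_{k-1}$) appear where — and verifying independence holds for each — is the delicate part that distinguishes FBF from FBFp and produces the different step-size thresholds ($\nicefrac{1}{\sqrt2 L}$ versus $\nicefrac{1}{3L}$); the FBFp analysis additionally has to carry the ``recycled-gradient'' term $F(w_{k-1};\xi_{k-1}) - F(w_k;\xi_k)$ through an extra telescoping over consecutive indices, which is where the factor-of-2 loss in step size originates.
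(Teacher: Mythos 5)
Your proposal follows essentially the same route as the paper's proof: the prox characterization $z_k-\alpha_k F(\diamondsuit_k;\triangle_k)-w_k\in\alpha_k\partial r(w_k)$, the three-point identity, substitution of $z_{k+1}-w_k=\alpha_k(F(\diamondsuit_k;\triangle_k)-F(w_k;\xi_k))$, a Young-inequality split with the parameter tuned so that $1+\gamma^{-1}=(1-\alpha^2L^2)^{-1}$, the vanishing of $\Exp{\langle W_k, z-w_k\rangle}$ via independence and unbiasedness for fixed $z$, telescoping, and the passage to the gap via monotonicity/convexity (the paper's Lemma~\ref{lem:small-g-gap}). The supremum-versus-expectation exchange you flag as the main obstacle is treated in the paper in exactly the order you call naive --- expectation for each fixed comparator $z$, then supremum at the end --- so that concern, while legitimate, is one the paper's own argument shares rather than a gap specific to your plan.
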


Theorem~\ref{thm:stoch}~(i) can be deduced from the above statement by using $\alpha=\nicefrac{1}{\sqrt{2}L}$ which yields that ${(1-\alpha^2L^2)}^{-1} = 2$.

\begin{theorem}%
  \label{thm:refined-stoch-fbfp}
  Let Assumption~\ref{ass:unbiased},~\ref{ass:bounded-var} and~\ref{ass:independent} hold and let ${(w_k)}_{k\ge0}$ be the sequence generated by FBFp, i.e.\ Algorithm~\ref{alg:generalized-stoch-fbf} with $\diamondsuit_k=w_{k-1}$ and $\triangle_k=\xi_{k-1}$. Let the step size $\alpha_k \le \alpha < \frac{1}{2\sqrt{2}L}$, then
  \begin{equation}
    \Exp{G_B(\bar{w}_K)} \le \frac{D^2 + 2\left(5+\frac{4 \alpha^2L^2}{1-8\alpha^2L^2}\right)\sigma^2\sum_{k=0}^{K-1}\alpha^2_k}{2\sum_{k=0}^{K-1}\alpha_k},
  \end{equation}
  for all $K\ge1$.
\end{theorem}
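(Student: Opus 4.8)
The plan is to run the standard "energy/descent inequality plus averaging" argument for Tseng-type methods, adapted to the recycled-gradient variant and to the stochastic setting, and then take expectations using Assumptions~\ref{ass:unbiased}--\ref{ass:independent}. First I would fix an arbitrary $z\in B$ and expand $\tfrac12\|z_{k+1}-z\|^2$ using the update $z_{k+1}=w_k+\alpha_k\bigl(F(w_{k-1};\xi_{k-1})-F(w_k;\xi_k)\bigr)$. Writing $g_k:=F(w_k;\xi_k)$ and $\hat g_k:=F(\diamondsuit_k;\triangle_k)=g_{k-1}$ for FBFp, the key is to use the prox-inequality characterizing $w_k=\prox{\alpha_k r}{z_k-\alpha_k\hat g_k}$, namely $\langle w_k-z_k+\alpha_k\hat g_k,\, z-w_k\rangle\ge \alpha_k(r(w_k)-r(z))$ for all $z$. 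Combining this with the expansion of $\|z_{k+1}-z\|^2$, one obtains, after collecting terms, a bound of the shape
\begin{equation}
  \alpha_k\bigl(\langle F(w_k), w_k-z\rangle + r(w_k)-r(z)\bigr) \le \tfrac12\|z_k-z\|^2 - \tfrac12\|z_{k+1}-z\|^2 + R_k + \text{(noise terms)},
\end{equation}
where $R_k$ collects the cross terms $\langle g_{k-1}-g_k,\, w_k-z_{k+1}\rangle$ and the error $\langle F(w_k)-g_k,\, w_k-z\rangle$ from replacing the true operator by its estimate, plus the telescoping-friendly pieces $\tfrac12\|w_k-z_k\|^2 - \tfrac12\|z_{k+1}-w_k\|^2$. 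The role of the Lipschitz bound $\alpha\le \nicefrac{1}{2\sqrt2 L}$ is exactly to absorb $\|z_{k+1}-w_k\|^2 = \alpha_k^2\|g_{k-1}-g_k\|^2$ against the good negative terms, leaving a residual quantified by $\sigma^2$.

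Next I would take conditional expectations. By Assumption~\ref{ass:independent} and unbiasedness, $\E[\langle F(w_k)-g_k, w_k-z\rangle]$ vanishes only after I have replaced the $z$-dependence by something measurable; the standard trick (as in stochastic Mirror-Prox) is to split $w_k-z = (w_k - \tilde z_k) + (\tilde z_k - z)$ where $\tilde z_k$ is an auxiliary sequence driven by the noise alone, so that the inner product with $\tilde z_k-z$ has zero mean and the inner product with $w_k-\tilde z_k$ is controlled by $\alpha_k^2\|F(w_k)-g_k\|^2$ via Young's inequality, contributing another multiple of $\sigma^2\alpha_k^2$. The term coming from $g_{k-1}$ (the recycled gradient) is handled similarly but requires care because $\hat g_k = g_{k-1}$ carries noise from step $k-1$: I would write $g_{k-1} = F(w_{k-1}) + (g_{k-1}-F(w_{k-1}))$ and bound $\|g_{k-1}-F(w_{k-1})\|^2$ in expectation by $\sigma^2$, while $F(w_{k-1})-F(w_k)$ is Lipschitz-bounded by $L\|w_{k-1}-w_k\|$, and $\|w_{k-1}-w_k\|$ telescopes against the stored $\tfrac12\|w_{k-1}-z_{k-1}\|^2$-type terms. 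This is where the somewhat baroque constant $5 + \tfrac{4\alpha^2L^2}{1-8\alpha^2L^2}$ emerges: the $\tfrac{4\alpha^2L^2}{1-8\alpha^2L^2}$ piece is the price of routing the Lipschitz term through the geometric-series absorption enabled by $8\alpha^2L^2<1$, and the $5$ collects the several $\sigma^2\alpha_k^2$ contributions from the noise splittings.

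Then I would sum over $k=0,\dots,K-1$: the $\tfrac12\|z_k-z\|^2$ terms telescope to at most $\tfrac12\|z_0-z\|^2\le \tfrac12 D^2$, the $\tfrac12\|w_k-z_k\|^2 - \tfrac12\|z_{k+1}-w_k\|^2$ chain collapses (here I use $w_{-1}=z_0$ and $\xi_{-1}=\eta_0$ to start it cleanly), and the noise residuals add up to $2\bigl(5+\tfrac{4\alpha^2L^2}{1-8\alpha^2L^2}\bigr)\sigma^2\sum_{k}\alpha_k^2$ in expectation. Finally, convexity of $r$ and the convex-concave structure let me pass from $\sum_k \alpha_k\bigl(\langle F(w_k), w_k-z\rangle + r(w_k)-r(z)\bigr)$ to $\bigl(\sum_k\alpha_k\bigr) G_B(\bar w_K)$ via Jensen (the same way Theorem~\ref{thm:minimax-gap}/Theorem~\ref{thm:equivalence-solution} connect the linearized gap to the minimax gap), and dividing by $2\sum_k\alpha_k$ gives the claimed bound; taking the sup over $z\in B$ is legitimate because the left-hand side before averaging is affine in $z$. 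The main obstacle I anticipate is the bookkeeping in the third paragraph's absorption step: keeping track of exactly which negative squared-norm terms are "spent" on which positive cross terms, so that the geometric-series constant comes out as stated rather than merely "some finite constant," and making sure the stochastic-splitting of the recycled gradient $g_{k-1}$ does not introduce correlations that violate Assumption~\ref{ass:independent} when conditioning.
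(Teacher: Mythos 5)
Your proposal follows essentially the same route as the paper's proof: the prox characterization of $w_k$, the three-point expansion of $\tfrac12\|z_{k+1}-z\|^2$, a Young-type splitting of $\alpha_k^2\|F(w_{k-1};\xi_{k-1})-F(w_k;\xi_k)\|^2$ into a Lipschitz part and a variance part, absorption of $\|w_k-w_{k-1}\|^2$ against the negative $\|z_k-w_k\|^2$ term using the previous update $z_k-w_{k-1}=\alpha_{k-1}\bigl(F(w_{k-2};\xi_{k-2})-F(w_{k-1};\xi_{k-1})\bigr)$, telescoping started by $w_{-1}=z_0$, and Lemma~\ref{lem:small-g-gap} to pass from the weighted sum of $g(w_k,z)$ to the gap of the averaged iterate. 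Your reading of the constant is also accurate: the paper arrives at $5+2\gamma^{-1}$ with the Young parameter fixed by $\tfrac{1}{2(4+\gamma)}=\alpha^2L^2$, whence $2\gamma^{-1}=\tfrac{4\alpha^2L^2}{1-8\alpha^2L^2}$ (this is a one-step absorption of $\|w_{k-1}-w_{k-2}\|^2$ into the coefficient stored at step $k-1$, rather than a genuine geometric series, but the arithmetic is the same), and the $5$ is $2$ from the variance of $Z_k,W_k$ plus $3$ from bounding $\|z_k-w_{k-1}\|^2$ via the two past stochastic gradients. The one place you genuinely depart from the paper is the treatment of the martingale term $\langle W_k, z-w_k\rangle$ against the supremum over $z$: the paper fixes a deterministic $z$, kills this term by the tower property together with Assumptions~\ref{ass:unbiased} and~\ref{ass:independent}, and only takes $\sup_{z\in B}$ at the very end --- i.e.\ it bounds $\sup_{z}\E[\cdot]$ and then invokes Lemma~\ref{lem:small-g-gap}, implicitly interchanging expectation and supremum. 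You instead propose the stochastic Mirror-Prox device of an auxiliary noise-driven sequence $\tilde z_k$, which is the rigorous way to control $\E[\sup_z(\cdot)]$ directly; this buys correctness on that interchange but injects additional $\sigma^2\alpha_k^2$ and $D^2$ contributions, so if you carry it out in full you should expect a somewhat larger constant than the $2\bigl(5+\tfrac{4\alpha^2L^2}{1-8\alpha^2L^2}\bigr)$ stated in the theorem.
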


Theorem~\ref{thm:stoch}~(ii) is obtained from the above theorem by using the particular step size bound of $\alpha=\nicefrac{1}{3L}$, which yields that
\begin{equation}
  \frac{4 \alpha^2L^2}{1-8\alpha^2L^2} = 4.
\end{equation}

Although, the step size in the refined statements Theorem~\ref{thm:refined-stoch-fbf} and~\ref{thm:refined-stoch-fbfp} can be chosen arbitrarily close to $\nicefrac{1}{L}$ and $\nicefrac{1}{(2\sqrt{2}L)}$ for stochastic FBF and stochastic FBFp, respectively. This does not mean it should be --- since the constant in the convergence rate deteriorates when the step size is close to its allowed upper bound.

\section{Proofs}%
\label{sub:proofs}

\subsection{Preparations}%

We introduce the notation connected to the strong formulation of the VI~\eqref{eq:VI-strong} associated to the monotone inclusion~\eqref{eq:main}, given by
\begin{equation}
  \label{eq:strong-gap-somewhat}
  g(w,z) := \langle F(w), w-z \rangle + r(w) - r(z),
\end{equation}
for $g:\R^m\times\R^m \to \R\cup\{+\infty\}$.
Next we will establish the fact that this function can be used to bound the (restricted) unifying gap function, which we remind, is defined as
\begin{equation}
  G_B(w) = \begin{cases}
    \sup_{(x,y)\in B}\, \Psi(u,y) - \Psi(x,v)  &  \text{if $F$ is~\eqref{eq:F-from-saddle}}\\
    \sup_{z\in B} \,  \langle F(z),w-z \rangle + r(w) - r(z) &  \text{otherwise},
  \end{cases}
\end{equation}
where in the first case $(u,v) \in \R^d \times \R^n$ is identified with $w \in\R^m$. In particular the dimensions fulfill $d+n=m$, and $r(w)$ is given by $f(u)+h(v)$.

\begin{lemma}%
  \label{lem:small-g-gap}
  It holds that for all $K\ge1$
  \begin{equation}
    \sup_{z \in B} \left\{\frac{1}{\sum_{k=0}^{K-1}\alpha_k}\sum_{k=0}^{K-1}\alpha_k g(w_k,z) \right\} \ge G_B(\bar{w}_K).
  \end{equation}
\end{lemma}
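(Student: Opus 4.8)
The plan is to establish the inequality pointwise in $z$ and then take the supremum over $B$ on both sides; no convexity of $B$ is needed since the estimate holds for every fixed $z$ before the supremum is taken. Set $S_K := \sum_{k=0}^{K-1}\alpha_k$ and $\lambda_k := \alpha_k/S_K$, so that $\lambda_k \ge 0$, $\sum_{k=0}^{K-1}\lambda_k = 1$, and $\bar w_K = \sum_{k=0}^{K-1}\lambda_k w_k$. The statement to prove then reads $\sup_{z\in B}\sum_{k=0}^{K-1}\lambda_k g(w_k,z) \ge G_B(\bar w_K)$, with $g$ as in~\eqref{eq:strong-gap-somewhat}. I would split along the two branches of the definition of $G_B$.

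In the general monotone case, the key observation is that monotonicity of $F$ gives $\langle F(w_k), w_k - z\rangle \ge \langle F(z), w_k - z\rangle$, hence
\begin{equation}
  g(w_k,z) \ge \langle F(z), w_k - z\rangle + r(w_k) - r(z) \qquad \text{for every } z\in\R^m.
\end{equation}
Multiplying by $\lambda_k$, summing over $k$, using linearity of the inner product in $\sum_k \lambda_k(w_k - z) = \bar w_K - z$, and applying Jensen's inequality to the convex function $r$ (so that $\sum_k \lambda_k r(w_k) \ge r(\bar w_K)$) yields $\sum_{k=0}^{K-1}\lambda_k g(w_k,z) \ge \langle F(z), \bar w_K - z\rangle + r(\bar w_K) - r(z)$. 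Taking the supremum over $z\in B$ gives precisely $G_B(\bar w_K)$ on the right-hand side.

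For the saddle point case, write $w_k = (u_k, v_k)$ and $z = (x,y)$. Since $F$ has the form~\eqref{eq:F-from-saddle}, convexity of $\Phi(\cdot,v_k)$ and concavity of $\Phi(u_k,\cdot)$ give $\langle \nabla_x\Phi(u_k,v_k), u_k - x\rangle \ge \Phi(u_k,v_k) - \Phi(x,v_k)$ and $\langle \nabla_y\Phi(u_k,v_k), y - v_k\rangle \ge \Phi(u_k,y) - \Phi(u_k,v_k)$; adding these and recalling $r(w_k) = f(u_k)+h(v_k)$, $r(z) = f(x)+h(y)$, and $\Psi(a,b) = f(a)+\Phi(a,b)-h(b)$, one obtains $g(w_k,z) \ge \Psi(u_k,y) - \Psi(x,v_k)$. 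Averaging with the weights $\lambda_k$ and using that $u\mapsto\Psi(u,y)$ is convex while $v\mapsto\Psi(x,v)$ is concave (Jensen in each block) gives $\sum_{k=0}^{K-1}\lambda_k g(w_k,z) \ge \Psi(\bar u_K, y) - \Psi(x,\bar v_K)$, and taking the supremum over $(x,y)\in B$ again produces $G_B(\bar w_K)$. The argument is essentially a routine "discrete Jensen plus (one-sided) monotonicity" computation; the only point requiring care is the sign bookkeeping in the saddle point branch — correctly absorbing the regularizers $f,h$ into $\Psi$ and applying the convexity/concavity in the right variable — which I regard as the main thing to get right rather than a genuine obstacle.
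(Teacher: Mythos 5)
Your proposal is correct and follows essentially the same route as the paper's proof: monotonicity of $F$ (resp.\ convexity--concavity of $\Phi$) to pass from $g(w_k,z)$ to the weak-VI (resp.\ minimax) expression, followed by weighted averaging and Jensen applied to $r$ and to the convex--concave $\Psi$. You merely make explicit the final averaging/Jensen step that the paper compresses into ``using the linearity of the inner product'' and ``using the fact that $\Psi$ is convex-concave.''
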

\begin{proof}
  First we will prove the case if $F$ is derived from a saddle point problem. Note that from the convex-concave structure of $\Phi$ we get that
  \begin{equation}
    \Phi(u,y) \le \Phi(u,v) + \langle \nabla_y \Phi(u,v), y-v \rangle
  \end{equation}
  and
  \begin{equation}
    \Phi(u,v) + \langle \nabla_x \Phi(u,v), x-u \rangle \le \Phi(x,v).
  \end{equation}
  By summing the two up we obtain
  \begin{equation}
    \label{eq:fun-val-VI}
    \Phi(u,y) - \Phi(x,v) \le \left\langle
      \begin{array}{cc}
        -\nabla_x \Phi(u,v),& x - u\\
        \nabla_y \Phi(u,v),& y - v
      \end{array}
    \right\rangle.
  \end{equation}
  We can reformulate the above inequality in terms of $g$ to see that for $z=(x, y)\in\R^d\times\R^n$
  \begin{equation}
    \langle F(w),w-z \rangle \ge \Phi(u, y) - \Phi(x, v).
  \end{equation}
  The statement of the first case is obtained by adding $r(w)-r(z)$ on both sides and using the fact that $\Psi$ is convex-concave.

  If $F$ is a general monotone operator, then we use its monotonicity to deduce that
  \begin{equation}
    \langle F(w), w-z \rangle \ge \langle F(z), w-z \rangle.
  \end{equation}
  The desired result follows from using the linearity of the inner product.
\end{proof}

\paragraph{Notation.} We denote the error of the stochastic estimator via
\begin{equation}
  \label{eq:decomp}
  Z_k := F(\diamondsuit_k;\triangle_k) - F(\diamondsuit_k) \quad\text{and}\quad W_k := F(w_k;\xi_k) - F(w_k).
\end{equation}
Furthermore, we will denote via $\Expcond{\,\cdot}{U}$, the conditional expectation with respect to the random variable $U$.

\subsection{A unified decrease result}%

We will start with a unifying proposition which covers the common parts of all convergence proofs.
\begin{proposition}%
  \label{prop:general}
  For a $\gamma>0$ we have that for all $k\ge0$ and $z\in\R^m$
  \begin{equation}
    \label{eq:final-stoch}
    \begin{aligned}
      \MoveEqLeft \alpha_k\Exp{g(w_k,z)} + \frac12 \E\Vert z_{k+1} - z \Vert^2 \\
      &\le \frac12 \E\Vert z_k - z \Vert^2 - \frac12\E\Vert z_k - w_k \Vert^2 + \frac12(1+\gamma)\alpha_k^2L^2 \E\Vert \diamondsuit_k-w_k \Vert^2 + 2(1+\gamma^{-1})\alpha^2_k \sigma^2,
    \end{aligned}
  \end{equation}
\end{proposition}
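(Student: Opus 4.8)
The plan is to derive the inequality from the update rules of Algorithm~\ref{alg:generalized-stoch-fbf}, treating the deterministic case of Algorithm~\ref{alg:generalized-det-fbf} as the special case $\sigma = 0$. First I would use the defining property of the proximal step $w_k = \prox{\alpha_k r}{z_k - \alpha_k F(\diamondsuit_k;\triangle_k)}$. By the standard prox inequality (the characterization of $\prox{\alpha_k r}{\cdot}$ via its subdifferential), for every $z \in \R^m$ one has
\begin{equation}
  \langle z_k - \alpha_k F(\diamondsuit_k;\triangle_k) - w_k, z - w_k \rangle + \alpha_k r(w_k) \le \alpha_k r(z).
\end{equation}
Next I would rewrite the second update as $z_{k+1} - w_k = \alpha_k(F(\diamondsuit_k;\triangle_k) - F(w_k;\xi_k))$, so that $z_k - \alpha_k F(\diamondsuit_k;\triangle_k) - w_k = (z_k - z_{k+1}) - \alpha_k F(w_k;\xi_k)$. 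Substituting this into the prox inequality and rearranging gives
\begin{equation}
  \alpha_k \langle F(w_k;\xi_k), w_k - z \rangle + \alpha_k(r(w_k) - r(z)) \le \langle z_k - z_{k+1}, w_k - z \rangle.
\end{equation}

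The right-hand side is handled by the three-point (Pythagoras) identity $\langle z_k - z_{k+1}, w_k - z \rangle = \tfrac12\Vert z_k - z\Vert^2 - \tfrac12\Vert z_{k+1} - z\Vert^2 + \tfrac12\Vert z_{k+1} - w_k\Vert^2 - \tfrac12\Vert z_k - w_k\Vert^2$. For the term $\tfrac12\Vert z_{k+1} - w_k\Vert^2 = \tfrac12\alpha_k^2\Vert F(\diamondsuit_k;\triangle_k) - F(w_k;\xi_k)\Vert^2$, I would insert the true operator values: write $F(\diamondsuit_k;\triangle_k) - F(w_k;\xi_k) = (F(\diamondsuit_k) - F(w_k)) + Z_k - W_k$ using the notation~\eqref{eq:decomp}, then bound $\Vert F(\diamondsuit_k) - F(w_k)\Vert \le L\Vert \diamondsuit_k - w_k\Vert$ by Lipschitz continuity. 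Splitting the cross terms with Young's inequality weighted by $\gamma$ — specifically $\Vert a + b\Vert^2 \le (1+\gamma)\Vert a\Vert^2 + (1+\gamma^{-1})\Vert b\Vert^2$ with $a = F(\diamondsuit_k) - F(w_k)$ and $b = Z_k - W_k$, and then $\Vert Z_k - W_k\Vert^2 \le 2\Vert Z_k\Vert^2 + 2\Vert W_k\Vert^2$ — produces the factor $\tfrac12(1+\gamma)\alpha_k^2 L^2 \Vert \diamondsuit_k - w_k\Vert^2$ and a noise term $(1+\gamma^{-1})\alpha_k^2(\Vert Z_k\Vert^2 + \Vert W_k\Vert^2)$.

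Then I would pass to expectations. On the left, I need $\Exp{g(w_k,z)} = \Exp{\langle F(w_k), w_k - z\rangle + r(w_k) - r(z)}$, so I must replace $F(w_k;\xi_k)$ by $F(w_k)$: conditioning on the past and using Assumption~\ref{ass:independent} (so $w_k$ is independent of $\xi_k$) together with Assumption~\ref{ass:unbiased} gives $\Expcond{\langle F(w_k;\xi_k), w_k - z\rangle}{} = \Expcond{\langle F(w_k), w_k - z\rangle}{}$ — this is the point where the independence hypothesis is essential and where care is needed because $z$ itself may depend on the randomness when the supremum over $B$ is taken (for the purposes of this proposition $z$ is fixed, so the issue is deferred to the later theorems). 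Finally, the noise terms: by Assumption~\ref{ass:bounded-var}, $\Exp{\Vert W_k\Vert^2} \le \sigma^2$, and likewise $\Exp{\Vert Z_k\Vert^2} \le \sigma^2$ since $\triangle_k$ is a valid sample (either $\eta_k$ or $\xi_{k-1}$, both drawn from $Q$), so $(1+\gamma^{-1})\alpha_k^2\Exp{\Vert Z_k\Vert^2 + \Vert W_k\Vert^2} \le 2(1+\gamma^{-1})\alpha_k^2\sigma^2$. Collecting everything and dropping the nonnegative leftover $\tfrac12\Exp{\Vert z_{k+1} - w_k\Vert^2}$ contribution appropriately yields~\eqref{eq:final-stoch}.

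\textbf{Main obstacle.} The principal subtlety is bookkeeping the conditional expectation correctly: one must be sure that, at the moment $F(w_k;\xi_k)$ is replaced by its mean, the vector $w_k - z$ is measurable with respect to the conditioning $\sigma$-algebra and independent of $\xi_k$, which is exactly what Assumption~\ref{ass:independent} buys; and analogously for the term involving $\triangle_k$, whose sample index differs between the FBF variant ($\eta_k$, fresh) and the FBFp variant ($\xi_{k-1}$, reused from the previous iterate — here $w_{k-1}$ and hence $\diamondsuit_k$ was itself built using $\xi_{k-1}$, so one relies only on the variance bound, not on unbiasedness, for that term). Threading the single free parameter $\gamma$ through so that it can later be tuned per-variant (it will be chosen differently in Theorems~\ref{thm:refined-stoch-fbf} and~\ref{thm:refined-stoch-fbfp}) is the other thing to keep straight, but it is mechanical once the decomposition above is in place.
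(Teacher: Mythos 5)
Your proposal is correct and follows essentially the same route as the paper's proof: the prox/subdifferential inequality combined with the definition of $z_{k+1}$, the three-point identity, Young's inequality with parameter $\gamma$ on $\Vert z_{k+1}-w_k\Vert^2$ after inserting $F(\diamondsuit_k)$ and $F(w_k)$, and then expectations via Assumptions~\ref{ass:unbiased}--\ref{ass:independent}. Your remarks on the conditional-expectation bookkeeping and on only needing the variance bound (not unbiasedness) for the $Z_k$ term in the FBFp variant match the paper's argument exactly.
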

\begin{proof}
  Let $k\ge0$ and $z\in\R^m$ be arbitrary.
  Using the decomposition~\eqref{eq:decomp} it follows that
  \begin{equation}
    \label{eq:4.7}
    \langle \alpha_k F(w_k;\xi_k), w_k - z \rangle = \alpha_k \langle W_k, w_k - z \rangle + \alpha_k  \langle F(w_k),w_k-z \rangle.
  \end{equation}
  Since $\textup{prox}_{\alpha_k r}={(\Id+\alpha_k\partial r)}^{-1}$ we deduce that
  \begin{equation}
    \label{eq:projection}
    \langle z - w_k, w_k - z_k + \alpha_k F(\diamondsuit_k;\triangle_k) \rangle \ge \alpha_k(r(w_k) - r(z)).
  \end{equation}
  Adding~\eqref{eq:4.7} and~\eqref{eq:projection} gives that
  \begin{equation}
    \begin{aligned}
      \MoveEqLeft \left\langle \alpha_k (F(w_k;\xi_k) - F(\diamondsuit_k;\triangle_k)) + z_k - w_k, w_k - z \right\rangle\ge \alpha_k \left\langle W_k, w_k - z \right\rangle + \alpha_k g(w_k,z),
    \end{aligned}
  \end{equation}
  which, using the definition of $z_{k+1}$, is equivalent to
  \begin{equation}
    \label{eq:step1}
    \langle z - w_k, z_{k+1} - z_k \rangle  \ge \alpha_k \langle W_k, w_k - z \rangle + \alpha_k g(w_k,z).
  \end{equation}
  We estimate the inner product on the left side of the inequality by inserting and subtracting $z_k$ and using the three point identity twice to deduce
  \begin{equation}
    \label{eq:est-inner}
    \begin{aligned}
      \langle z - w_k, z_{k+1} - z_k \rangle &= \langle z - z_k + z_k -w_k, z_{k+1} - z_k \rangle \\
      &= \frac12 \left(\Vert z-z_k \Vert^2 - \Vert z_{k+1}-z \Vert^2 + \Vert z_{k+1}-w_k \Vert^2 - \Vert z_k-w_k \Vert^2 \right).
    \end{aligned}
  \end{equation}
  The first two summands are fine as they will telescope, so we are left with estimating $\Vert z_{k+1}-w_k \Vert^2$. By the definition of $z_{k+1}$ we have that
  \begin{equation}
    \label{eq:stoch-to-det-young}
    \begin{aligned}
      \Vert z_{k+1}-w_k \Vert^2 &= \alpha_k^2\Vert F(\diamondsuit_k;\triangle_k) - F(w_k;\xi_k) \Vert^2 \\
      &= \alpha_k^2\Vert F(\diamondsuit_k) - F(w_k) + Z_k - W_k \Vert^2 \\
      &\le (1+\gamma)\alpha_k^2\Vert F(\diamondsuit_k) - F(w_k) \Vert^2 + (1+\gamma^{-1})\alpha_k^2\Vert Z_k-W_k \Vert^2 \\
      &\le (1+\gamma)\alpha_k^2L^2\Vert \diamondsuit_k - w_k \Vert^2 + 2(1+\gamma^{-1})\alpha_k^2(\Vert Z_k \Vert^2 + \Vert W_k \Vert^2),
    \end{aligned}
  \end{equation}
  where we inserted and subtracted $F(\diamondsuit_k)$ and $F(w_k)$ and applied Young's inequality to deduce.
  Adding~\eqref{eq:stoch-to-det-young},~\eqref{eq:est-inner} and~\eqref{eq:step1} we deduce that
  \begin{equation}
    \begin{aligned}
      \alpha_k g(w_k,z) + \frac12\Vert z_{k+1} - z \Vert^2 &\le \frac12\Vert z_k - z \Vert^2 - \frac12\Vert z_k - w_k \Vert^2+ \frac12(1+\gamma)\alpha^2_k L^2 \Vert \diamondsuit_k-w_k \Vert^2\\
      & \quad+\alpha_k \left\langle W_k, z - w_k \right\rangle + (1+\gamma^{-1})\alpha^2_k(\Vert W_k \Vert^2 + \Vert Z_k \Vert^2).
    \end{aligned}
  \end{equation}
  Taking the expectation $\Exp{\cdot}$ and using the bounded variance assumption of the estimators yields
  \begin{equation}
    \begin{aligned}
      \MoveEqLeft \alpha_k\Exp{g(w_k,z)} + \frac12 \E\Vert z_{k+1} - z \Vert^2 \\
      &\le \frac12 \E\Vert z_k - z \Vert^2 - \frac12\E\Vert z_k - w_k \Vert^2 + \frac12(1+\gamma)\alpha_k^2L^2 \E\Vert \diamondsuit_k-w_k \Vert^2 + 2(1+\gamma^{-1})\alpha^2_k \sigma^2,
    \end{aligned}
  \end{equation}
  where we used that
  \begin{equation}
    \Exp{\langle W_k, z - w_k\rangle} = \E\expdelim[\Big]{\Expcond{\langle W_k, z - w_k\rangle}{w_k}} =\E\expdelim[\Big]{\left\langle\Expcond{W_k}{w_k}, z - w_k\right\rangle} = \Exp{0} = 0
  \end{equation}
  since
  \begin{equation}
    \Expcond{W_k}{w_k} = \Expcond{F(w_k;\xi_k)-F(w_k)}{w_k} \overset{(*)}{=} F(w_k) - F(w_k) = 0.
  \end{equation}
  Here, $(*)$ holds because of the independence and unbiasedness, see Assumption~\ref{ass:independent} and~\ref{ass:unbiased}, respectively.
\end{proof}

\subsection{Forward-Backward-Forward}%

\begin{proof}[Proof for deterministic FBF, Theorem~\ref{thm:refined-deterministic}~(i)]
  We start off by plugging $\diamondsuit_k=z_k$ into~\eqref{eq:final-stoch}.
  Since $\sigma=0$ we can discard the expectations and use $\gamma\to0$ to deduce that for all $k\ge0$
  \begin{equation}
    \begin{aligned}
      \MoveEqLeft \alpha_k g(w_k,z) + \frac12 \Vert z_{k+1} - z \Vert^2
      \le \frac12 \Vert z_k - z \Vert^2 - \frac12(1-\alpha_k^2L^2)\Vert z_k - w_k \Vert^2.
    \end{aligned}
  \end{equation}
  From this it is clear that the step size is constrained by $\alpha\le\nicefrac1L$ as stated in the theorem.
  By summing up from $k=0$ to $K-1$ and dividing by $\sum_{k=0}^{K-1}\alpha_k$ we obtain
  \begin{equation}
    \frac{1}{\sum_{k=0}^{K-1}\alpha_k}\sum_{k=0}^{K-1}\alpha_k g(w_k,z) \le \frac{\Vert z_0-z \Vert^2}{2 \sum_{k=0}^{K-1}\alpha_k}.
  \end{equation}
  The claimed statement is then derived by taking the supremum in $z$ over $B$ and applying Lemma~\ref{lem:small-g-gap}.
\end{proof}

\begin{proof}[Proof for stochastic FBF, Theorem~\ref{thm:refined-stoch-fbf}]
  Plugging $\diamondsuit_k=z_k$ and $\triangle_k=\eta_k$ into~\eqref{eq:final-stoch} gives for all $k\ge0$
  \begin{equation}
    \begin{aligned}
      \MoveEqLeft \alpha_k\Exp{g(w_k,z)} + \frac12 \E\Vert z_{k+1} - z \Vert^2 \\
      &\le \frac12 \E\Vert z_k - z \Vert^2 - \frac12(1-(1+\gamma)\alpha_k^2L^2)\E\Vert z_k - w_k \Vert^2  + 2(1+\gamma^{-1}) \alpha^2_k \sigma^2.
    \end{aligned}
  \end{equation}
  By choosing $\gamma$ such that $\alpha = {(\sqrt{1+\gamma}L)}^{-1}$ we deduce that $1+\gamma^{-1} = 1/(1-\alpha^2L^2)$.
  Next, we sum up and divide by $\sum_{k=0}^{K-1}\alpha_k$ to obtain
  \begin{equation}
    \Exp{\frac{1}{\sum_{k=0}^{K-1}\alpha_k}\sum_{k=0}^{K-1}\alpha_k g(w_k,z)} \le \frac{\E\Vert z_0 - z \Vert^2 + 4{(1-\alpha^2L^2)}^{-1}\sigma^2\sum_{k=0}^{K-1}\alpha^2_k}{2\sum_{k=0}^{K-1} \alpha_k}.
  \end{equation}
  The final statement follows by taking the supremum in $z$ over $B$ and applying Lemma~\ref{lem:small-g-gap}.
\end{proof}

\subsection{Forward-Backward-Forward-past}%

\begin{proof}[Proof for deterministic FBFp, Theorem~\ref{thm:refined-deterministic}~(ii)]
  We start off by plugging $\diamondsuit_k=z_k$ into~\eqref{eq:final-stoch}.
  Since $\sigma=0$ we can ignore the expectations and use $\gamma\to0$ to conclude that for all $k\ge0$
  \begin{equation}
    \label{eq:from-the-past-deterministic}
    \begin{aligned}
      \MoveEqLeft \alpha_k g(w_k,z) + \frac12 \Vert z_{k+1} - z \Vert^2 \le \frac12 \Vert z_k - z \Vert^2 -\frac12 \Vert z_k-w_k \Vert^2 + \frac12\alpha_k^2L^2\Vert w_{k-1}-w_k \Vert^2.
    \end{aligned}
  \end{equation}
  Now we need to bound the term $\Vert w_{k-1}-w_k \Vert^2$ by $\Vert z_k-w_k \Vert^2 $. Since
  \begin{equation}
    \label{eq:young-iter}
    2\Vert z_k-w_k \Vert^2 + 2\Vert z_k - w_{k-1} \Vert^2 \ge \Vert w_k-w_{k-1} \Vert^2
  \end{equation}
  we have for all $k\ge1$
  \begin{equation}
    \label{eq:k-greater-one-estimate}
    \begin{aligned}
      \Vert z_k - w_k \Vert^2 &\ge - \Vert z_k - w_{k-1} \Vert^2 + \frac12 \Vert w_{k-1}-w_k \Vert^2 \\
      &\ge - \alpha_{k-1}^2L^2 \Vert w_{k-1}-w_{k-2} \Vert^2+ \frac12 \Vert w_{k-1}-w_k \Vert^2
    \end{aligned}
  \end{equation}
  whereas for $k=0$, since $w_{-1}=z_0$, we have that
  \begin{equation}
    \label{eq:k-equal-zero-estimate}
    \Vert z_0 - w_0 \Vert^2 = \Vert w_{-1}-w_0 \Vert^2.
  \end{equation}
  Plugging~\eqref{eq:k-equal-zero-estimate} into~\eqref{eq:from-the-past-deterministic} for $k=0$ we get that
  \begin{equation}
    \label{eq:lulu}
    \alpha_0g(w_0,z) + \frac12 \Vert z_1 - z \Vert^2 + \frac12 (1-\alpha_0^2L^2) \Vert w_0 - w_{-1} \Vert^2
    \le \frac12 \Vert z_0 - z \Vert^2.
  \end{equation}
  Plugging~\eqref{eq:k-greater-one-estimate} into~\eqref{eq:from-the-past-deterministic} we get that for all $k\ge1$
  \begin{equation}
    \label{eq:sum-up-past-det}
    \begin{aligned}
      \MoveEqLeft \alpha_k g(w_k,z) + \frac12 \Vert z_{k+1} - z \Vert^2 + \frac12 \left(\frac12 - \alpha_k^2L^2\right)\Vert w_k - w_{k-1} \Vert^2 \\\le
      &\frac12 \Vert z_k - z \Vert^2 + \frac12\alpha_{k-1}^2L^2\Vert w_{k-1}-w_{k-2} \Vert^2.
    \end{aligned}
  \end{equation}
  In order to be able to telescope we need to ensure that for all $k\ge0$
  \begin{equation}
    \label{eq:condition}
    \left( \frac12-\alpha_k^2L^2 \right) \ge \alpha_k^2L^2.
  \end{equation}
  This is equivalent to the condition $\alpha_k\le \nicefrac{1}{2L}$ which was required in the statement of the theorem. Now we sum up~\eqref{eq:sum-up-past-det} from $k=1$ to $K-1$ which yields
  \begin{equation}
    \label{eq:whiskey}
    \begin{aligned}
      \MoveEqLeft \sum_{k=1}^{K-1}\alpha_k g(w_k,z) + \frac12 \Vert z_K - z \Vert^2 + \frac12 \left(\frac12-\alpha_{K-1}^2L^2\right)\Vert w_{K-1}-w_{K-2} \Vert^2 \\
      &\le \frac12 \Vert z_1 - z \Vert^2+ \frac12 \alpha_0^2L^2\Vert w_0-w_{-1} \Vert^2.
    \end{aligned}
  \end{equation}
  Adding~\eqref{eq:whiskey} and~\eqref{eq:lulu} and dividing by $\sum_{k=0}^{K-1} \alpha_k$ to deduce
  \begin{equation}
    \frac{1}{\sum_{k=0}^{K-1}\alpha_k}\sum_{k=0}^{K-1}\alpha_k g(w_k,z) \le \frac{\Vert z_0 - z \Vert^2}{2\sum_{k=0}^{K-1} \alpha_k},
  \end{equation}
  where we used that $1-\alpha_0^2L^2 \ge \alpha_0^2L^2$ to get rid of $\Vert w_0-w_{-1} \Vert^2$.
  The final statement follows by taking the supremum in $z$ over $B$ and applying Lemma~\ref{lem:small-g-gap}.
\end{proof}

\begin{proof}[Proof for stochastic FBFp, Theorem~\ref{thm:refined-stoch-fbfp}]
  By using $\diamondsuit_k=w_{k-1}$ we deduce from~\eqref{eq:final-stoch} for all $k\ge0$ that
  \begin{equation}
    \label{eq:stoch-past-key}
    \begin{aligned}
      \MoveEqLeft \alpha_k\Exp{g(w_k,z)} + \frac12 \E\Vert z_{k+1} - z \Vert^2 \\
      &\le \frac12 \E\Vert z_k - z \Vert^2 - \frac12\E\Vert z_k - w_k \Vert^2 + \frac12(1+\gamma)\alpha_k^2L^2 \E\Vert w_{k-1}-w_k \Vert^2 + 2(1+\gamma^{-1})\alpha^2_k \sigma^2.
    \end{aligned}
  \end{equation}
  Let from now on $k\ge1$ as we will treat the case $k=0$ separately.
  Using~\eqref{eq:young-iter} we deduce that
  \begin{equation}
    \label{eq:k-greater-one}
    \begin{aligned}
      \Vert z_k - w_k \Vert^2 &\ge - \Vert z_k - w_{k-1} \Vert^2 + \frac12 \Vert w_{k-1}-w_k \Vert^2 \\
      &\ge - \alpha_{k-1}^2 \Vert F(w_{k-1};\xi_{k-1})-F(w_{k-2};\xi_{k-2}) \Vert^2+ \frac12 \Vert w_{k-1}-w_k \Vert^2.
    \end{aligned}
  \end{equation}
  Now we bound the difference of the two estimators by inserting $\pm F(w_{k-1})$, $\pm F(w_{k-2})$ and applying the inequality $\lVert a + b + c \rVert^2 \le 3(\lVert a \rVert^2+\lVert b \rVert^2+\lVert c \rVert^2)$ which yields
  \begin{equation}
    \begin{aligned}
      \MoveEqLeft \Vert F(w_{k-1};\xi_{k-1})-F(w_{k-2};\xi_{k-2}) \Vert^2 \\
      &\le 3\Vert W_{k-1} \Vert^2 + 3\Vert W_{k-2} \Vert^2 + 3\Vert F(w_{k-2})-F(w_{k-1}) \Vert^2.
    \end{aligned}
  \end{equation}
  We conclude that
  \begin{equation}
    \label{eq:bound-diff-est}
    \begin{aligned}
      \MoveEqLeft \Exp{\Vert F(w_{k-1};\xi_{k-1})-F(w_{k-2};\xi_{k-2}) \Vert^2} \le 6 \sigma^2 + 3 L^2 \E\Vert w_{k-1}-w_{k-2} \Vert^2.
    \end{aligned}
  \end{equation}
  Using~\eqref{eq:bound-diff-est} in~\eqref{eq:k-greater-one} we deduce that
  \begin{equation}
    \label{eq:est-iter-diff}
    \E\Vert z_k-w_k \Vert^2 \ge -\alpha_{k-1}^2 (6 \sigma^2 + 3L^2 \E \Vert w_{k-1}-w_{k-2} \Vert^2) + \frac12 \E\Vert w_{k-1}-w_k \Vert^2,
  \end{equation}
  whereas for $k=0$ we have~\eqref{eq:k-equal-zero-estimate}.
  Now we plug~\eqref{eq:est-iter-diff} into~\eqref{eq:stoch-past-key} to conclude that
  \begin{equation}
    \label{eq:stoch-past-summable}
    \begin{aligned}
      \MoveEqLeft \alpha_k\Exp{g(w_k,z)} + \frac12 \E\Vert z_{k+1} - z \Vert^2 + \frac12 \left( \frac12 - (1+\gamma)\alpha_k^2L^2 \right) \E\Vert w_k - w_{k-1} \Vert^2 \\
      &\le \frac12 \E\Vert z_k - z \Vert^2 + \frac12 3\alpha_{k-1}^2L^2 \E\Vert w_{k-1}-w_{k-2} \Vert^2 + (2(1+\gamma^{-1})\alpha^2_k + 3 \alpha^2_{k-1}) \sigma^2.
    \end{aligned}
  \end{equation}
  From this we conclude that in order to be able to telescope we need to enforce
  \begin{equation}
    \left( \frac12-(1+\gamma)\alpha_k^2L^2 \right) \ge 3\alpha_k^2L^2
  \end{equation}
  which is equivalent to
  \begin{equation}
    \frac{1}{2(4+\gamma)} \ge \alpha_k^2L^2.
  \end{equation}
  Since $\alpha_k \le \alpha$, we can ensure this by choosing $\gamma$ such that
  \begin{equation}
    \label{eq:condition-gamma}
    \frac{1}{2(4+\gamma)} = \alpha^2L^2.
  \end{equation}
  With~\eqref{eq:condition-gamma} in place we sum~\eqref{eq:stoch-past-summable} from $k=1$ to $K-1$
  to deduce that
  \begin{equation}
    \label{eq:whiskey-stoch}
    \begin{aligned}
      \MoveEqLeft \sum_{k=1}^{K-1}\alpha_k\Exp{g(w_k,z)} + \frac12 \E\Vert z_K - z \Vert^2 + \frac12\left(\frac12-(1+\gamma)\alpha_{K-1}^2L^2\right)\E\Vert w_{K-1}-w_{K-2} \Vert^2 \\
      &\le \frac12 \E\Vert z_1 - z \Vert^2 + \frac123\alpha_0^2L^2\Vert w_0-w_{-1} \Vert^2 + (5+2\gamma^{-1})\sigma^2\sum_{k=1}^{K-1}\alpha^2_k + 3 \sigma^2 \alpha_0^2,
    \end{aligned}
  \end{equation}
  whereas for $k=0$ we have
  \begin{equation}
    \label{eq:lulu-stoch}
    \alpha_0\Exp{g(w_0,z)} + \frac12 \E\Vert z_1 - z \Vert^2 + \frac12 (1-(1+\gamma)\alpha_0^2L^2) \E\Vert w_0 - w_{-1} \Vert^2
    \le \frac12 \Vert z_0 - z \Vert^2 +  2(1+\gamma^{-1}) \alpha^2_0 \sigma^2.
  \end{equation}
  Combining~\eqref{eq:whiskey-stoch} and~\eqref{eq:lulu-stoch} and using the fact that $3 \alpha_0^2L^2 \le 1 - (1+\gamma)\alpha_0^2L^2$ from~\eqref{eq:condition-gamma} to discard the $\Vert w_0 - w_{-1} \Vert^2$ term, yields
  \begin{equation}
    \label{eq:almost-final-stoch}
    \sum_{k=0}^{K-1}\alpha_k\Exp{g(w_k,z)} \le \frac12 \Vert z_0 - z \Vert^2 + (5+2\gamma^{-1})\sigma^2\sum_{k=0}^{K-1}\alpha^2_k.
  \end{equation}
  Through~\eqref{eq:condition-gamma}, we can estimate
  \begin{equation}
    \label{eq:estimate-gamma}
    \frac{1}{\gamma} = \frac{2\alpha^2L^2}{1-8\alpha^2L^2}.
  \end{equation}
  Plugging~\eqref{eq:estimate-gamma} into~\eqref{eq:almost-final-stoch}, dividing by $\sum_{k=0}^{K-1}\alpha_k$ taking the supremum in $z$ over $B$ and applying Lemma~\ref{lem:small-g-gap}, deduces the final statement.
\end{proof}

\section{Architecture}%

\begin{table}[h]
  \begin{center}
    \begin{tabular}{c}
      \hline  
      \textbf{Generator}\\
      \hline  

      \\
      \textit{Input:} $ z \in \mathbb{R}^{128} \sim \mathcal{N}(0, I) $\\
      Linear $128  \to 512 \times 4 \times 4$\\
      Batch Normalization\\
      ReLU\\
      transposed conv. (kernel: $ 4 \times 4 $, $ 512 \to 256 $, stride: 2, pad: 1)\\
      Batch Normalization\\
      ReLU\\
      transposed conv. (kernel: $ 4 \times 4 $, $ 256 \to 128 $, stride: 2, pad: 1)\\
      Batch Normalization\\
      ReLU\\
      transposed conv. (kernel: $ 4 \times 4 $, $ 128 \to 3 $, stride: 2, pad: 1)\\
      $Tanh(\cdot)$\\
      \\

      \hline 
      \textbf{Discriminator}\\
      \hline 

      \\
      \textit{Input:} $ x \in \mathbb{R}^{3 \times 32 \times 32} $\\
      conv. (kernel: $ 4 \times 4 $, $ 1 \to 64 $, stride: 2, pad: 1)\\
      LeakyReLU (negative slope: 0.2)\\
      conv. (kernel: $ 4 \times 4 $, $ 64 \to 128 $, stride: 2, pad: 1)\\
      Batch Normalization\\
      LeakyReLU (negative slope: 0.2)\\
      conv. (kernel: $ 4 \times 4 $, $ 128 \to 256 $, stride: 2, pad: 1)\\
      Batch Normalization\\
      LeakyReLU (negative slope: 0.2)\\
      Linear $ 128 \times 4 \times 4 \times 4 \to 1 $\\
      \\

      \hline 
    \end{tabular}
  \end{center}
  \caption{DCGAN architecture for our experiments on CIFAR10.}%
  \label{tab:arch}
\end{table}

\section{Hyperparameters}%

For the WGAN formulation with \emph{weight clipping}, see Table~\ref{tab:clipping}, we used the extensively tuned hyperparameters from~\cite{VIP-GAN-gidel} for ExtraAdam, Adam1 and OptimisticAdam. Note that our values of the Inception Score (IS) differ from the ones reported in~\cite{VIP-GAN-gidel} as we use the newer implementation of the IS proposed in~\cite{is-pytorch}.
For FBF-Adam we tuned the step size and kept all other hyperparameters equal.

\begin{table}[h]
  \centering
  \begin{tabular}{ll}\toprule
    \textbf{WGAN Hyperparameters}\\\toprule
    Batch size &= $64$ \\
    Number of generator updates &= $500,000$ \\
    Adam $\beta_1$ &= $0.5$ \\
    Adam $\beta_2$ &= $0.9$ \\
    Weight clipping for the discriminator &= $0.01$ \\
    Learning rate for generator &= $5\times10^{-4}$ (Extra Adam) \\
               &= $2\times10^{-4}$ (AltAdam1, FBF Adam, Optim. Adam) \\
    Learning rate for discriminator &= $5\times10^{-5}$ (Extra Adam) \\
               &= $2\times10^{-5}$ (AltAdam1, FBF Adam, Optim. Adam) \\
    \bottomrule
  \end{tabular}%
  \caption{Hyperparameters used for the WGAN formulation (with weight clipping).}%
  \label{tab:clipping}
\end{table}

For our newly proposed WGAN-L1 formulation using $1$-Norm regularization, see Table~\ref{tab:l1}, we limited the hyperparameter search to the step sizes, covering a range the values of Table~\ref{tab:clipping}. We choose the value performing the best in terms of IS and FID for a sample seed. All other parameters were kept the same as in~\cite{VIP-GAN-gidel, bot-michael-rifbf}.

\begin{table}[h]
  \centering
  \begin{tabular}{ll}\toprule
    \textbf{WGAN-L1 Hyperparameters}\\\toprule
    Batch size &= $64$ \\
    Number of generator updates &= $500,000$ \\
    Adam $\beta_1$ &= $0.5$ \\
    Adam $\beta_2$ &= $0.9$ \\
    L1 regularization for the discriminator &= $1\times10^{-4}$ \\
    Learning rate for generator &= $1\times10^{-3}$ (FBF Adam, Extra Adam)\\
               &= $5\times10^{-4}$ (Optim. Adam) \\
               &= $2\times10^{-4}$ (AltAdam1)\\
    Learning rate for discriminator &= $1\times10^{-4}$ (FBF Adam, Extra Adam) \\
               &= $5\times10^{-5}$ (Optim. Adam) \\
               &= $2\times10^{-5}$ (AltAdam1)\\
    \bottomrule
  \end{tabular}%
  \caption{Hyperparameters used for WGAN-L1.}%
  \label{tab:l1}
\end{table}

\end{document}